\def\cA{\mathcal A}
\def\cC{\mathcal C}
\def\cF{\mathcal F}
\def\cP{\mathcal P}
\def\cU{\mathcal U}
\def\cW{\mathcal W}
\def\cX{\mathcal X}
\def\N{\mathop{\mathbb N\kern 0pt}\nolimits}
\def\Z{\mathop{\mathbb Z\kern 0pt}\nolimits}
\def\Q{\mathop{\mathbb Q\kern 0pt}\nolimits}
\def\R{\mathop{\mathbb R\kern 0pt}\nolimits}
\def\T{\mathop{\mathbb T\kern 0pt}\nolimits}
\def\SS{\mathop{\mathbb S\kern 0pt}\nolimits}
\def\ds{\displaystyle}
\def\p{\partial}
\def\ve{\varepsilon}
\def\ls{\lesssim}
\newcommand{\w}[1]{\langle {#1} \rangle}
\theoremstyle{plain}
\newtheorem{theorem}{Theorem}[section]
\newtheorem{lemma}[theorem]{Lemma}
\newtheorem{corollary}[theorem]{Corollary}
\theoremstyle{definition}
\newtheorem{remark}{Remark}[section]
\numberwithin{equation}{section}
\title{Global existence and scattering of small data smooth solutions to quasilinear
wave systems on $\mathbb{R}^2\times\mathbb{T}$, II}
\author{Fei Hou$^{1,*}$ \quad Fei Tao$^{2,*}$ \quad Huicheng Yin$^{3,}$
    \footnote{Hou Fei(\texttt{fhou$@$nju.edu.cn}),  Tao Fei(\texttt{feitao$@$njupt.edu.cn}) and  Yin Huicheng(\texttt{huicheng$@$nju.edu.cn}, \texttt{05407$@$njnu.edu.cn}) are supported by the NSFC (No.~12331007, No.~12101304). In addition, Tao Fei is supported by the NSF of
    Jiangsu Provincial (No. BK20240602), and the Natural Science Research Start-up Foundation of Recruiting Talents of
    Nanjing University of Posts and Telecommunications (Grant No. NY223200);
    Yin Huicheng is supported by the National key research and development program of China (No.2020YFA0713803).}\\
    [12pt]{\small 1. School of Mathematics, Nanjing University, Nanjing, 210093, China}\\
    {\small 2. School of Science, Nanjing University of Posts and Telecommunications,}\\
    {\small Nanjing, 210023, China}\\
    {\small 3. School of Mathematical Sciences and Mathematical Institute,}\\
    {\small Nanjing Normal University, Nanjing, 210023, China}}
\begin{document}

\date{}
\maketitle
\thispagestyle{empty}

\begin{abstract}
In our previous paper [Fei Hou, Fei Tao, Huicheng Yin, Global existence and scattering of small data smooth solutions to
a class of quasilinear wave systems on $\mathbb{R}^2\times\mathbb{T}$, Preprint (2024), arXiv:2405.03242],
for the $Q_0$-type  quadratic nonlinearities, we have shown the global well-posedness and scattering properties of
small data smooth solutions to the quasilinear wave systems on $\mathbb{R}^2\times\mathbb{T}$.
In this paper, we start to solve the global existence problem for the remaining $Q_{\alpha\beta}$-type nonlinearities.
By combining these results, we have established the global well-posedness of small solutions  on $\mathbb{R}^2\times\mathbb{T}$
for the general 3-D quadratically quasilinear wave systems when the related 2-D null conditions are fulfilled.

\vskip 0.2 true cm

\noindent
\textbf{Keywords.} Global existence, scattering, quasilinear wave system, $Q_{\alpha\beta}$-type nonlinearity,

\qquad \quad~~space-time decay estimate, ghost weight

\vskip 0.1 true cm
\noindent
\textbf{2020 Mathematical Subject Classification.}  35L05, 35L15, 35L70
\end{abstract}

\vskip 0.1 true cm


\section{Introduction}
We continue to study the global well-posedness and scattering properties of small data smooth solutions
to the general 3-D quadratically quasilinear wave systems  on $\mathbb{R}^2\times\mathbb{T}$
when the related 2-D null conditions are fulfilled. The motivations and physical backgrounds for
studying nonlinear wave equations (or systems) on product spaces
arise from the Kaluza-Klein theory (see \cite{Kalu,Klein,Witten}) and wave propagation in infinite homogeneous
waveguides (see \cite{Ett-15,PLR-03,MSS-05}).
For the $Q_0$-type  quadratic nonlinearities,
the global solution problem on $\mathbb{R}^2\times\mathbb{T}$ has been solved in our previous paper \cite{HTY24arXiv}.
We now deal with the remaining $Q_{\alpha\beta}$-type nonlinearities, namely,
consider the following 3-D quasilinear wave systems on $\R^2\times\T$,
\begin{equation}\label{QWE}
\left\{
\begin{aligned}
&\Box u^i=G^i(\p u,\p^2u):=\sum_{j,k=1}^m\sum_{\alpha,\beta=0}^2\Big(\sum_{\gamma=0}^3C^{\alpha\beta\gamma}_{ijk}Q_{\alpha\beta}(\p_{\gamma}u^j,u^k)
+C^{\alpha\beta}_{ijk}Q_{\alpha\beta}(u^j,u^k)\Big)\\
&\hspace{3.5cm}+\sum_{j,k,l=1}^m\sum_{\alpha,\beta,\gamma=0}^3Q^{\alpha\beta\gamma}_{ijkl}\p_{\alpha}u^j\p_{\beta}u^k\p_{\gamma}u^l,\quad i=1,\cdots,m,\\
&(u,\p_tu)(0,x,y)=(u_{(0)},u_{(1)})(x,y),
\end{aligned}
\right.
\end{equation}
where $u=(u^1, ..., u^m)$, $(t,x,y)\in [0,+\infty)\times\R^2\times\T$, $x=(x_1,x_2)$, $\p=(\p_0,\p_1,\p_2,\p_3)=(\p_t,\p_{x_1},\p_{x_2},$
$\p_y)$, $\ds\Box:=\Box_{t,x,y}=\p_t^2-\Delta_x-\p_y^2$ with $\Delta_x=\p_1^2+\p_2^2$, $\ds Q_{\alpha\beta}(f,g)=\p_{\alpha}f\p_{\beta}g-\p_{\beta}f\p_{\alpha}g$ for $\alpha,\beta=0,1,2$,
$C^{\alpha\beta\gamma}_{ijk}$, $C^{\alpha\beta}_{ijk}$ and $Q^{\alpha\beta\mu}_{ijkl}$ are constants,
$u_{(r)}=(u_{(r)}^1, ..., u_{(r)}^m)$ for $r=0,1$, and $(u_{(0)},u_{(1)})$ is $2\pi$-periodic in the variable $y$.
In addition, the right hand side of the first line in \eqref{QWE} can be rewritten as
\begin{equation}\label{NL:rewrite}
\begin{split}
\sum_{\alpha,\beta=0}^2\sum_{\gamma=0}^3C^{\alpha\beta\gamma}_{ijk}Q_{\alpha\beta}(\p_{\gamma}u^j,u^k)
=\sum_{\alpha,\beta,\gamma=0}^3Q_{ijk}^{\alpha\beta\gamma}\p^2_{\alpha\beta}u^j\p_{\gamma}u^k,
\end{split}
\end{equation}
and the following symmetric conditions are imposed for $i,j,k=1,\cdots,m$,
\begin{equation}\label{sym:condition}
Q_{ijk}^{\alpha\beta\gamma}=Q_{ijk}^{\beta\alpha\gamma}=Q_{jik}^{\alpha\beta\gamma}.
\end{equation}
It is easy to check that for $i,j,k=1,\cdots,m$ and $(\xi_0,\xi_1,\xi_2)\in\{\pm1\}\times\SS$,
\begin{equation}\label{NC:rewrite}
\sum_{\alpha,\beta,\gamma=0}^2Q_{ijk}^{\alpha\beta\gamma}\xi_{\alpha}\xi_{\beta}\xi_{\gamma}\equiv0,
\end{equation}
which means that the first null condition holds for the related 2-D quadratic nonlinearities
of quasilinear wave equations. It is well-known from \cite{Klainerman, Katayama12} that when the first null condition holds,
the related 2-D quadratic  nonlinearity is only composed of the nonlinear forms $Q_0(\p u,u)$, $Q_0(u,u)$,
$Q_{\alpha\beta}(\p u,u)$ and $Q_{\alpha\beta}(u,u)$, where $Q_0(f,g)=\p_tf\p_tg-\sum_{j=1}^2\p_jf\p_jg$.
On the other hand,  for the 2-D nonlinear scalar wave equations, both the first and second null conditions
are sufficient and necessary in order to guarantee the global existence of small data smooth solutions
(see \cite{Alinhac01a,Alinhac01b}).
Therefore, for the cubic nonlinearity in \eqref{QWE}, such a second null condition is also required
in order to study the global solution problem of \eqref{QWE}:

{\it For $i,j,k,l=1,\cdots,m$ and $(\xi_0,\xi_1,\xi_2)\in\{\pm1\}\times\SS$, it holds that
\begin{equation}\label{cubic:null:condtion}
\sum_{\alpha,\beta,\mu=0}^2Q^{\alpha\beta\mu}_{ijkl}\xi_{\alpha}\xi_{\beta}\xi_{\mu}\equiv0.
\end{equation}}

As in \cite{HTY24arXiv}, we set
\begin{equation}\label{initial:data}
\begin{split}
\ve&:=\sum_{i+j\le2N+1}\|\w{x}^{4+i+j}\nabla_x^i\p_y^ju_{(0)}\|_{L^2_{x,y}}
+\sum_{i+j\le2N}\|\w{x}^{4+i+j}\nabla_x^i\p_y^ju_{(1)}\|_{L^2_{x,y}},
\end{split}
\end{equation}
where $N\ge 21$, $\nabla_x=(\p_1,\p_2)$, $\|\cdot\|_{L^2_{x,y}}:=\|\cdot\|_{L^2(\R^2\times\T)}$ and $\w{x}:=\sqrt{1+|x|^2}$.
Our main result is:
\begin{theorem}\label{thm1}
There is an $\ve_0>0$ such that when $\ve\le\ve_0$, problem \eqref{QWE} with \eqref{sym:condition} and \eqref{cubic:null:condtion} has a
global solution $u\in\bigcap\limits_{j=0}^{2N+1}C^{j}([0,\infty), H^{2N+1-j}(\R^2\times\T))$ satisfying
\begin{equation}\label{thm1:decay}
\begin{split}
&|u(t,x,y)|\le C\ve\w{t+|x|}^{-1/2}\w{t-|x|}^{-0.4},\quad|\p u(t,x,y)|\le C\ve\w{t+|x|}^{-1/2}\w{t-|x|}^{-1/2},\\
&|\p_y u(t,x,y)|\le C\ve\w{t+|x|}^{-1},
\end{split}
\end{equation}
where  $C>0$ is a generic constant independent of $\ve$.
Moreover, $u$ scatters to a free solution: there exists $(u_{(0)}^\infty,u_{(1)}^\infty)\in
\dot H^{N+2}(\R^2\times\T)\times H^{N+1}(\R^2\times\T)$ such that
\begin{equation}\label{thm1:scater}
\|\p(u(t,\cdot)-u^\infty(t,\cdot))\|_{H^{N+1}(\R^2\times\T)}\le C\ve^2(1+t)^{-1/2},
\end{equation}
where $u^\infty$ is the solution of the homogeneous linear wave equation $\Box u^\infty=0$ on $\R^2\times\T$ with suitable
initial data $(u_{(0)}^\infty,u_{(1)}^\infty)(x,y)$ being $2\pi$-periodic on $y$.
\end{theorem}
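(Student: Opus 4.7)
The plan is to extend the bootstrap framework developed in \cite{HTY24arXiv} for the $Q_0$-case to the present $Q_{\alpha\beta}$-case, combining the 2-D vector field method with a Fourier expansion in $y$ and an Alinhac ghost-weight scheme adapted to the $Q_{\alpha\beta}$ null structure on $\R^2\times\T$. Expanding $u(t,x,y)=\sum_{k\in\Z}u_k(t,x)e^{iky}$, the condition \eqref{NC:rewrite} ensures that the zero mode $u_0$ solves a 2-D quasilinear wave system whose quadratic part consists exclusively of $Q_{\alpha\beta}$ null forms with $\alpha,\beta\in\{0,1,2\}$, while each nonzero mode $u_k$ obeys a 2-D Klein-Gordon system with mass $|k|$, coupled to other modes through quadratic and cubic interactions. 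The decay hierarchy in \eqref{thm1:decay} reflects this dichotomy: $u$ and $\p u$ inherit 2-D massless wave decay (with a slight $0.4$-degradation in $\w{t-|x|}$ coming from time-integration along null rays), whereas $\p_y u$ sees only the massive modes and therefore enjoys the stronger Klein-Gordon rate $\w{t+|x|}^{-1}$.

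Next I would run a high-order weighted energy estimate using the Klainerman family acting on $(t,x)$, namely $\p_\mu$ ($\mu=0,1,2$), the rotation $\Omega=x_1\p_2-x_2\p_1$, the Lorentz boosts $L_i=t\p_i+x_i\p_t$ and the scaling $S=t\p_t+x_1\p_1+x_2\p_2$, augmented by $\p_y$, which commutes with $\Box$ on $\R^2\times\T$. Letting $\Gamma$ range over this family, set
\begin{equation*}
E_N(t)=\sum_{|I|\le N}\int_{\R^2\times\T}e^{q(|x|-t)}|\p\Gamma^I u|^2\,dx\,dy,\qquad q(s)=\int_{-\infty}^s\w{\tau}^{-1.1}\,d\tau.
\end{equation*}
Time-differentiation of $E_N$ produces a spacetime flux that controls the good tangential derivatives $\bar\p\Gamma^I u$ in $L^2_t L^2_{x,y}$, plus source terms that, after commuting $\Gamma^I$ through the $Q_{\alpha\beta}$'s and cubic forms, are again $Q_{\alpha\beta}$'s of the commuted unknowns modulo lower-order pieces. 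A null-frame decomposition shows that each $Q_{\alpha\beta}$ carries at least one $\bar\p$-factor (or a $(t-|x|)/\w{t+|x|}\cdot\p$-factor), which is absorbable into the ghost flux; the cubic null condition \eqref{cubic:null:condtion} plays the analogous role at cubic level. The quasilinear top-order contribution is closed via the symmetrization \eqref{sym:condition}, which permits a modified energy equivalent to $E_N$ with no derivative loss.

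Third, I would convert the energy bounds into the pointwise estimates \eqref{thm1:decay}. For the zero mode, the 2-D Klainerman-Sobolev inequality on $\R^2$ yields $|\p u_0|\lesssim\w{t+|x|}^{-1/2}\w{t-|x|}^{-1/2}E_N^{1/2}$, and integration along outgoing rays produces the $\w{t-|x|}^{-0.4}$ decay for $u_0$ itself, the $0.1$ loss absorbing the well-known logarithmic divergence inherent in 2-D wave propagation. For the nonzero modes, the 2-D Klein-Gordon vector-field decay estimate, with $\p_y$ acting as mass, gives the $\w{t+|x|}^{-1}$ bound on $\p_y u$ after summation in $k$. Scattering \eqref{thm1:scater} then follows because the obtained pointwise decay together with the null and cubic-null structures renders the $H^{N+1}$ norm of the right-hand side of \eqref{QWE} integrable in time at rate $\w{t}^{-1-\delta}$, producing the Cauchy criterion for $\p u(t)$ in $H^{N+1}(\R^2\times\T)$ at rate $\w{t}^{-1/2}$.

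The hardest step will be closing the bootstrap at the quadratic zero-mode level, where the 2-D $Q_{\alpha\beta}$ interactions are only borderline under the ghost-weight scheme and where zero/nonzero mode interactions must be separated carefully so that no logarithmic loss accumulates across the $N\approx 21$ vector-field derivatives. A secondary technical point is handling the quasilinear top-order contributions without derivative loss, which forces construction of the modified energy consistent with \eqref{sym:condition} and careful tracking of its equivalence with $E_N$ throughout the bootstrap.
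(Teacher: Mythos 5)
Your outline reproduces much of the paper's architecture (Fourier decomposition in $y$, ghost weight $e^{q(|x|-t)}$ with $q'(s)=\w{s}^{-1.1}$, null-frame decomposition of $Q_{\alpha\beta}$, symmetrization via \eqref{sym:condition}, Duhamel for scattering), but it contains a genuine gap at its core: you include the scaling vector field $S=t\p_t+x_1\p_1+x_2\p_2$ in the commuting family. On $\R^2\times\T$ one has $[S,\Box]=-2\Box_{t,x}=-2\Box-2\p_y^2$, so commuting $S$ through \eqref{QWE} produces the linear term $2\p_y^2u$, which is of the same size as $u$ itself and cannot be treated perturbatively; equivalently, $S$ fails to commute with the Klein--Gordon operators $\p_t^2-\Delta_x+|n|^2$ governing the nonzero modes. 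This is precisely the obstruction the paper identifies as the essential difficulty, and the admissible family is only $Z=\{\p_{t,x},L_1,L_2,\Omega_{12},\p_y\}$. The same gap propagates to your third step: the standard 2-D Klainerman--Sobolev inequality you invoke to get $|\p u_0|\ls\w{t+|x|}^{-1/2}\w{t-|x|}^{-1/2}E_N^{1/2}$ requires $S$ (see \eqref{HCC-5}) and is explicitly unavailable here.

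To repair this you need the two substitutes the paper actually uses: (i) the pointwise null-form estimate $\w{t+|x|}|Q_{\alpha\beta}(f,g)|\ls|\Gamma f||\p_{t,x}g|+|\p_{t,x}f||\Gamma g|$, whose right-hand side involves no scaling field, so that the $S$-free family still extracts the full $\w{t+|x|}^{-1}$ gain from the quadratic terms; and (ii) a replacement for Klainerman--Sobolev, namely weighted $L^\infty$ estimates for the inhomogeneous 2-D wave equation (Lemmas \ref{lem:improv1}--\ref{lem:improv2}, applied to the zero modes with the source bounded via the bootstrap) together with a weighted Sobolev estimate controlled by the auxiliary norm $\cX_{2N}[u]$, and a separate Klein--Gordon-type pointwise lemma for the nonzero modes giving the $\w{t+|x|}^{-1}$ rate. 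Without these substitutions the energy estimate does not close for the nonzero modes and the conversion from energy to the decay \eqref{thm1:decay} has no valid mechanism. A further (minor) inaccuracy: the $\w{t-|x|}^{-0.4}$ rate for $u$ is obtained directly from the weighted $L^\infty$ estimate with $\rho=0.4$, not by integrating $|\p u|$ along outgoing rays, which would only give a logarithm times $\w{t-|x|}^{-1/2}\cdot\w{t-|x|}$-type bounds.
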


\begin{remark}
{\it It follows from the equations of \eqref{QWE} and the partial Fourier transformation on the
periodic variable $y$ that
\begin{equation}\label{reduction to WKG}
(\p_t^2-\Delta_x+|n|^2)u^i_n(t,x)=(G^i(\p u,\p^2u))_n(t,x),\quad x\in \R^2, n\in\Z, i=1,\cdots,m,
\end{equation}
where
\begin{equation*}\label{reduction to WKG-00}
u^i_n(t,x):=\frac{1}{2\pi}\int_{\T}e^{-ny\sqrt{-1}}u^i(t,x,y)dy,\quad n\in \Z.
\end{equation*}
Obviously, \eqref{reduction to WKG} is an infinite coupled 2-D nonlinear equations
whose zero-modes $u^i_0$ and non-zero modes $u^i_n$ ($n\in \Z \setminus\{0\}$)
solve the wave equations and the Klein-Gordon equations with mass $|n|$, respectively.
However, the global existence
of small data solutions to the general 2-D coupled wave-Klein-Gordon equation system with null conditions is unknown.
For the following wave-Klein-Gordon system on $\R^{1+3}$
\begin{equation*}\label{Simplied-WKG-eq}
\begin{cases}
&-\Box u=A^{\alpha\beta}\p_\alpha v \p_\beta v+Dv^2,\\
&(-\Box +1)v=u B^{\alpha\beta}\p_\alpha\p_\beta v,
\end{cases}
\end{equation*}
the global existence of small solution $(u,v)$ has been established (see \cite{Q-Wang-2015,IP19,LeF-Ma-16,Q-Wang-2020}).
In addition, there have been some interesting  results on the global well-posedness of small solutions to various finitely coupled
wave and Klein-Gordon equations in lower space dimensions under suitable restrictions on the
related nonlinearities (see \cite{Dong-JFA, Yue-M-6, Ifrim-Stingo, Stingo-Memoirs}).}
\end{remark}

\begin{remark}
{\it Note that for the general 2-D quadratic nonlinearity satisfying
 the first null condition, it is a linear combination of $Q_0(\p^{\leq 1}u,\p^{\leq 1}u)$
 and $Q_{\alpha\beta}(\p^{\leq 1}u,\p^{\leq 1}u)$ (see \cite{Klainerman, Katayama12}).
 By combining  the main result and the  argument techniques
 in  \cite{HTY24arXiv} with the corresponding ones in the present paper,
 we can prove that the analogous results to Theorem \ref{thm1} hold
 for the following more general wave systems on $\R^2\times\T$
\begin{equation*}
\left\{
\begin{aligned}
&\Box u^i=\sum_{j,k=1}^m\sum_{|a|,|b|\le1}C^{ab}_{ijk}Q_0(\p^au^j,\p^bu^k)
+\sum_{j,k=1}^m\sum_{\alpha,\beta=0}^2\Big(\sum_{\mu,\nu=0}^3C^{\alpha\beta\mu\nu}_{ijk}Q_{\alpha\beta}(\p_{\mu}u^j,\p_{\nu}u^k)\\
&\qquad+\sum_{\gamma=0}^3C^{\alpha\beta\gamma}_{ijk}Q_{\alpha\beta}(\p_{\gamma}u^j,u^k)
+C^{\alpha\beta}_{ijk}Q_{\alpha\beta}(u^j,u^k)\Big)+\sum_{j,k,l=1}^m\sum_{\alpha,\beta,\mu,\nu=0}^3Q^{\alpha\beta\mu\nu}_{ijkl}
\p^2_{\alpha\beta}u^j\p_{\mu}u^k\p_{\nu}u^l\\&\qquad
+\sum_{j,k,l=1}^m\sum_{\alpha,\beta,\gamma=0}^3Q^{\alpha\beta\gamma}_{ijkl}\p_{\alpha}u^j\p_{\beta}u^k\p_{\gamma}u^l,\qquad i=1,\cdots,m,\\
&(u,\p_tu)(0,x,y)=(u_{(0)},u_{(1)})(x,y).
\end{aligned}
\right.
\end{equation*}
when the related symmetric conditions like \eqref{sym:condition} and null conditions like \eqref{NC:rewrite} are imposed.}
\end{remark}

We now recall some results related to Theorem \ref{thm1}.
Recently, the authors in \cite{HS-21} investigate a system of quasilinear wave equations with nonlinearities
being  the combinations of quadratic null forms on $\R^3\times\T$,
which can be seen as a toy model of Einstein equations with additional compact dimensions.
It is shown in \cite{HS-21} that the  global smooth solution exists for the small and regular initial data with polynomial
decays at infinity in $\Bbb R^3$. The authors in \cite{HSW2023} demonstrate the classical global stability of the flat
Kaluza-Klein space-time $\R^{1+3}\times \mathbb{S}^1$ under small perturbations.
In \cite{HTY24}, we establish the global or almost global small data solutions of 4-D fully nonlinear wave
equations on $\R^{3}\times\T$ when the general partial null
condition is fulfilled or not, meanwhile, the global or almost global small data solutions
are also obtained for the 3-D fully nonlinear wave equations with cubic nonlinearities on $\R^{2}\times\T$.
In our previous paper \cite{HTY24arXiv}, we have solved the existence problem of global small data solutions for 3-D
quasilinear wave equation systems on $\R^{2}\times\T$ when the related quadratic nonlinearities
satisfy the $Q_0$-type null forms.

Next, we give some comments on the proof of Theorem \ref{thm1}. As pointed out in  \cite{HTY24arXiv},
the following crucial Klainerman-Sobolev inequality (see \cite{Klainerman85} and \cite{Hormander97book})
\begin{equation}\label{HCC-5}
(1+|t-r|)^{1/2}(1+t)^{(d-1)/2}|\p\phi(t,x)|\le C\ds\sum_{|I|\le [d/2]+2}\|\hat{Z}^I\p \phi(t,x)\|_{L_x^2(\Bbb R^d)}
\end{equation}
can not been used for the nonlinear problem \eqref{QWE} on $\R^2\times\T$,
where $\hat{Z}\in\{\p, x_i\p_j-x_j\p_i, x_i\p_t+t\p_i, 1\le i,j\le d, t\partial_t+\sum_{k=1}^d x_k\p_k\}$ and $r=|x|=\sqrt{x_1^2+\cdot\cdot\cdot+x_d^2}$.
To overcome this essential difficulty, in our work \cite{HTY24arXiv}, we transform the $Q_0$-type nonlinearities into
the cubic terms on  $\R^2\times\T$ by looking for some good unknowns.
However, it seems difficult for us to find suitable transformations such that the nonlinearities of $Q_{\alpha\beta}$-type
are changed into at least cubic nonlinear forms. Fortunately, the following crucial decay estimate holds (see \cite[Lem 3.3]{Sogge95})
\begin{equation}\label{Qab-Tao}
\w{t+|x|}|Q_{\alpha\beta}(f,g)|\ls|\Gamma f||\p_{t,x} g|+|\p_{t,x} f||\Gamma g|,
\end{equation}
where $\Gamma\in \{\p_{t,x},L_1,L_2,\Omega_{12}\}$ with
$L_i=x_i\p_t+t\p_i$ ($i=1,2$) and $\Omega_{12}=x_1\p_2-x_2\p_1$.
It is emphasized that  the troublesome scaling vector $S=t\partial_t+\sum_{k=1}^2 x_k\p_k$ does not appear 
in the right hand side of \eqref{Qab-Tao} (note that $S$ does not commute with the Klein-Gordon operator $\square+|n|^2$ with $n\not=0$
in \eqref{reduction to WKG}).
In this case, the vector field method can be applied for both the wave equation and the Klein-Gordon equation,
namely, for \eqref{reduction to WKG} with any $n\in\Z$.
On the other hand, we establish the key pointwise decay estimates for the solution $u$ of \eqref{QWE}
through deriving the space-time decay estimates of zero modes in Lemma \ref{lem:pw0:improv} and non-zero modes 
in Lemma \ref{YHC-01}.
Together with Alinhac's ghost weighted technique in \cite{Alinhac01a} for treating the global small solution problem
of 2-D quasilinear wave equations with null
conditions, we eventually complete the proof of Theorem \ref{thm1} by the continuation argument.

The paper is organized as follows.
In Section 2, we give  some basic lemmas and the bootstrap assumptions.
The pointwise estimates of the zero modes and the non-zero modes are obtained in Section 3.
In Section 4, based on the pointwise estimates in Section 3 and the ghost weight technique, 
we derive the required energy estimates for problem \eqref{QWE}.
In Section 5, the proof of Theorem \ref{thm1} is completed.

\vskip 0.2 true cm

\noindent \textbf{Notations:}
\begin{itemize}
  \item $\w{x}:=\sqrt{1+|x|^2}$.
  \item $\p_0:=\p_t$, $\p_1:=\p_{x_1}$, $\p_2:=\p_{x_2}$, $\p_3:=\p_y$, $\p_x:=\nabla_x=(\p_1,\p_2)$, $\p_{t,x}:=(\p_0,\p_1,\p_2)$, $\p:=(\p_t,\p_1,\p_2,\p_y)$, and $\Box_{t,x}:=\p_t^2-\Delta_x=\p_t^2-\p_1^2-\p_2^2$.
  \item For $|x|>0$, define $\bar\p_i:=\p_i+\frac{x_i}{|x|}\p_t$, $i=1,2$ and $\bar\p=(\bar\p_1,\bar\p_2)$.
 For $|x|=0$, $\bar\p$ is $\p_{t,x}$.
  \item On $\R^2\times\T$, define $L_i:=x_i\p_t+t\p_i,i=1,2$, $L:=(L_1,L_2)$, $\Omega:=\Omega_{12}:=x_1\p_2-x_2\p_1$, $\Gamma=\{\Gamma_1,\cdots,\Gamma_{6}\}:=\{\p_{t,x},L_1,L_2,\Omega_{12}\}$ and $Z=\{Z_1,\cdots,Z_{7}\}:=\Gamma\cup\{\p_y\}$.
  \item $\Gamma^a:=\Gamma_1^{a_1}\Gamma_2^{a_2}\cdots\Gamma_{6}^{a_{6}}$ for $a\in\N_0^{6}$
      and $Z^a:=Z_1^{a_1}Z_2^{a_2}\cdots Z_{7}^{a_{7}}$ for $a\in\N_0^7$.
  \item $\|\cdot\|_{L^p_x}:=\|\cdot\|_{L^p(\R^2)}$, $\|\cdot\|_{L^p_y}:=\|\cdot\|_{L^p(\T)}$ and $\|\cdot\|_{L^p_{x,y}}:=\|\cdot\|_{L^p(\R^2\times\T)}$.
  \item $P_{=0}f(t,x):=\frac{1}{2\pi}\int_{\T}f(t,x,y)dy$ and $P_{\neq0}:={\rm Id}-P_{=0}$.
  \item Auxiliary $k$-order energy of vector value function $V=(V^1, ..., V^m)$ on $\R^2\times\T$:
  \begin{equation*}
   \cX_k[V](t):=\sum_{i=1}^m\sum_{|a|\le k-1}(\|\w{t-|x|}P_{=0}\p^2Z^aV^i\|_{L^2(\R^2)}
   +\|\w{t+|x|}P_{=0}\bar\p\p Z^aV^i\|_{L^2(\R^2)}),\quad k\in\Bbb N.
  \end{equation*}
  \item $\ds f_n(t,x):=\frac{1}{2\pi}\int_{\T}e^{-ny\sqrt{-1}}f(t,x,y)dy$, $n\in\Z$, $x\in\R^2$.
  \item For $f,g\ge0$, $f\ls g$ means $f\le Cg$ for a generic constant $C>0$.
  \item $\ds|Z^{\le j}f|:=\Big(\sum_{0\le|a|\le j}|Z^af|^2\Big)^\frac12$.
  \item $\ds|\cP u|:=\Big(\sum_{i=1}^m|\cP u^i|^2\Big)^\frac12$ with $\cP\in\{\bar\p,Z\}$.
\end{itemize}

\vskip 0.1 true cm

\section{Preliminaries and bootstrap assumptions}\label{sect2}
\subsection{Some basic lemmas}\label{sect2-1}

At first, we present some properties associated with the projection operators $P_{=0}$ and $P_{\neq0}$,
which will be frequently utilized later.
\begin{lemma}
For any real valued functions $f(t,x,y)$ and $g(t,x,y)$ on $\mathbb{R}^2\times\mathbb{T}$, it holds that
\begin{equation}\label{proj:property}
\begin{split}
&\int_{\T}P_{=0}fP_{\neq0}gdy=0,\quad
\|f\|^2_{L_y^2}=\|P_{=0}f\|^2_{L_y^2}+\|P_{\neq0}f\|^2_{L_y^2},\\
&P_{=0}Zf=ZP_{=0}f,\quad P_{\neq0}Zf=ZP_{\neq0}f,\\
&P_{=0}\p_yf=\p_yP_{=0}f=0,\quad(\Gamma f)_n=\Gamma(f_n),\\
&P_{=0}(fg)=P_{=0}fP_{=0}g+P_{=0}(P_{\neq0}fP_{\neq0}g),\\
&P_{\neq0}(fg)=P_{\neq0}(P_{=0}fP_{\neq0}g)+P_{\neq0}(P_{\neq0}fP_{=0}g)+P_{\neq0}(P_{\neq0}fP_{\neq0}g),\\
&|P_{=0}f|\ls\|f\|_{L_y^\infty},\quad|P_{\neq0}f|\ls\|f\|_{L_y^\infty}.
\end{split}
\end{equation}
\end{lemma}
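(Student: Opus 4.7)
The plan is to verify each of the six groups of identities directly from the definitions
\[
P_{=0}f(t,x)=\frac{1}{2\pi}\int_{\T}f(t,x,y)\,dy,\qquad P_{\neq0}=\mathrm{Id}-P_{=0},
\]
using the two structural facts that $P_{=0}f$ does not depend on $y$ and that $P_{\neq0}g$ has vanishing mean in $y$. First I would establish the orthogonality $\int_{\T}P_{=0}f\,P_{\neq0}g\,dy=0$ by pulling the $y$-independent factor $P_{=0}f$ out of the integral to leave $\int_{\T}P_{\neq0}g\,dy=0$. Writing $f=P_{=0}f+P_{\neq0}f$ and squaring, the cross term vanishes by this orthogonality and the $L^2_y$-Pythagorean identity follows.

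Next, for the commutation $P_{=0}Zf=ZP_{=0}f$ and $P_{\neq0}Zf=ZP_{\neq0}f$, I would observe that the only $y$-differentiating field in $Z$ is $\p_y$; the remaining fields $\p_t,\p_{x_1},\p_{x_2},L_1,L_2,\Omega_{12}$ have coefficients independent of $y$ and commute with $y$-integration. For $\p_y$ itself, periodicity of $f$ in $y$ gives $P_{=0}\p_yf=0$, while $\p_yP_{=0}f=0$ since $P_{=0}f$ is $y$-independent, so both sides agree. The $P_{\neq0}$ commutation is obtained by subtracting, and the Fourier-coefficient relation $(\Gamma f)_n=\Gamma(f_n)$ follows by the same argument applied to $\frac{1}{2\pi}\int_\T e^{-iny\sqrt{-1}}\Gamma f\,dy$, since $\Gamma\in\{\p_{t,x},L_1,L_2,\Omega_{12}\}$ commutes with multiplication by $e^{-iny\sqrt{-1}}$ and with integration in $y$.

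For the product decomposition, I would expand
\[
fg=(P_{=0}f+P_{\neq0}f)(P_{=0}g+P_{\neq0}g)
\]
into four pieces and apply $P_{=0}$ termwise. The mixed pieces $P_{=0}f\cdot P_{\neq0}g$ and $P_{\neq0}f\cdot P_{=0}g$ are $y$-mean-zero (a $y$-independent factor times a mean-zero factor), so they drop out under $P_{=0}$; the surviving contributions give the stated formula for $P_{=0}(fg)$, and the formula for $P_{\neq0}(fg)$ is obtained by subtracting this from $fg$, keeping only the pieces that carry a $P_{\neq0}$ factor. Finally, the pointwise bounds come from $|P_{=0}f(t,x)|\le\frac{1}{2\pi}\int_\T|f|\,dy\le\|f\|_{L_y^\infty}$ and then $|P_{\neq0}f|\le|f|+|P_{=0}f|\ls\|f\|_{L_y^\infty}$.

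No single step is a genuine obstacle; the only care required is bookkeeping the fact that every vector field in $Z$ except $\p_y$ has coefficients independent of $y$, so that $y$-averaging commutes with it, and verifying that the two cross terms in the product expansion indeed have zero $y$-mean. The remainder is a direct chain of substitutions from the definitions.
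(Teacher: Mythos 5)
Your verification is correct and is exactly the direct check from the definitions that the paper invokes when it states the properties "can be directly verified" and omits the details: each identity follows from the two facts that $P_{=0}f$ is $y$-independent and $P_{\neq0}g$ has zero $y$-mean, together with the observation that every field in $\Gamma$ has $y$-independent coefficients and no $\p_y$ component. No gaps; the argument is complete.
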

\begin{proof}
Since these properties can be directly verified, we omit the details here.
\end{proof}

The following lemma indicates that the nonlinearities of $Q_{\alpha\beta}$-type remain invariant
under the action of the vector fields $Z's$.

\begin{lemma}\label{lem:commut}
For any real valued functions $f(t,x,y)$ and $g(t,x,y)$ on $\mathbb{R}^2\times\mathbb{T}$, it holds
\begin{equation}\label{Commutator;Qab}
ZQ_{\alpha\beta}(f,g)=Q_{\alpha\beta}(Zf,g)+Q_{\alpha\beta}(f,Zg)+\sum_{\mu,\nu=0}^2a^{Z,\mu\nu}_{\alpha\beta}Q_{\mu\nu}(f,g),
\end{equation}
where $a^{Z,\mu\nu}_{\alpha\beta}\in\{0,\pm1\}$.
In addition, one has
\begin{equation}\label{Commutator;CC}
[Z,\p_y]=0,\qquad [Z,\p_{\alpha}]=\sum_{\beta=0}^2a^{Z,\beta}_{\alpha}\p_{\beta}\quad\text{for $\alpha=0,1,2$},
\end{equation}
where $a^{Z,\beta}_{\alpha}\in\{0,\pm1\}$.
\end{lemma}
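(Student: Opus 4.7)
The plan is to prove the two identities of Lemma \ref{lem:commut} in sequence, first the single-derivative commutators \eqref{Commutator;CC} and then the bilinear null-form identity \eqref{Commutator;Qab}, with the second following from the first by the Leibniz rule.

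For \eqref{Commutator;CC} I would simply verify each case on the generators of $Z$. None of the fields in $Z\setminus\{\p_y\}$ involve the variable $y$, so every one of them commutes with $\p_y$, giving $[Z,\p_y]=0$. For $[Z,\p_\alpha]$ with $\alpha\in\{0,1,2\}$, the coordinate derivatives $\p_{t,x}$ commute among themselves, while the only nontrivial cases are
\begin{equation*}
[L_i,\p_0]=-\p_i,\qquad [L_i,\p_j]=-\delta_{ij}\p_0,\qquad [\Omega_{12},\p_0]=0,\qquad [\Omega_{12},\p_1]=-\p_2,\qquad [\Omega_{12},\p_2]=\p_1,
\end{equation*}
all of which have coefficients in $\{0,\pm1\}$.

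For the bilinear identity \eqref{Commutator;Qab}, the plan is to apply the Leibniz rule
\begin{equation*}
ZQ_{\alpha\beta}(f,g)=(Z\p_\alpha f)\p_\beta g+\p_\alpha f\,(Z\p_\beta g)-(Z\p_\beta f)\p_\alpha g-\p_\beta f\,(Z\p_\alpha g),
\end{equation*}
and then use \eqref{Commutator;CC} in the form $Z\p_\gamma=\p_\gamma Z+\sum_{\mu}a^{Z,\mu}_\gamma\p_\mu$ inside each of the four terms. The contributions carrying $\p_\gamma Zf$ or $\p_\gamma Zg$ reassemble into $Q_{\alpha\beta}(Zf,g)+Q_{\alpha\beta}(f,Zg)$, while the commutator remainders organize as
\begin{equation*}
\sum_{\mu=0}^2 a^{Z,\mu}_\alpha\bigl(\p_\mu f\,\p_\beta g-\p_\beta f\,\p_\mu g\bigr)+\sum_{\nu=0}^2 a^{Z,\nu}_\beta\bigl(\p_\alpha f\,\p_\nu g-\p_\nu f\,\p_\alpha g\bigr)=\sum_{\mu=0}^2 a^{Z,\mu}_\alpha Q_{\mu\beta}(f,g)+\sum_{\nu=0}^2 a^{Z,\nu}_\beta Q_{\alpha\nu}(f,g),
\end{equation*}
which is a linear combination of the form $\sum_{\mu,\nu=0}^2 a^{Z,\mu\nu}_{\alpha\beta}Q_{\mu\nu}(f,g)$.

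The only subtlety is checking $a^{Z,\mu\nu}_{\alpha\beta}\in\{0,\pm1\}$. Because for each $Z$ the commutator $[Z,\p_\gamma]$ is either zero or a single $\p_\mu$ with coefficient $\pm1$, each of the two residual sums contributes at most one null form; using the antisymmetry $Q_{\mu\nu}=-Q_{\nu\mu}$ to merge indices and noting that diagonal terms $Q_{\mu\mu}$ vanish automatically, no coefficient exceeds $1$ in absolute value (the potential overlap, e.g.\ $Z=L_1$ acting on $Q_{01}$ where both $[L_1,\p_0]$ and $[L_1,\p_1]$ are nonzero, produces only $Q_{00}$ and $Q_{11}$ contributions which vanish). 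This is the one spot requiring care, but it amounts to a bookkeeping check rather than a genuine obstacle.
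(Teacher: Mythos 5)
Your proposal is correct and follows essentially the same route as the paper: compute the commutators $[L_i,\p_\alpha]$ and $[\Omega_{12},\p_\alpha]$ explicitly (your values agree with the paper's \eqref{Commutator;Qab01}), then apply the Leibniz rule to $Q_{\alpha\beta}$ and reassemble the remainder terms as a combination of null forms, which is exactly the paper's \eqref{Commutator;Qab02}. Your extra care about the coefficients staying in $\{0,\pm1\}$ (overlapping cases producing only vanishing $Q_{\mu\mu}$ or cancelling $Q_{\mu\nu}+Q_{\nu\mu}$ pairs) is a valid bookkeeping check that the paper leaves implicit.
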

\begin{proof}
It is easy to check that for $\alpha=0,1,2$,
\begin{equation}\label{Commutator;Qab01}
\begin{split}
[L_{i},\p_\alpha]&=-\delta_{i\alpha}\p_0-\delta_{0\alpha}\p_i,\quad i=1,2,\\
[\Omega_{12},\p_\alpha]&=-\delta_{1\alpha}\p_2+\delta_{2\alpha}\p_1,
\end{split}
\end{equation}
where $\delta_{\alpha\beta}(\alpha,\beta\in \{0,1,2\})$ is the Kronecker symbol. By \eqref{Commutator;Qab01}, we have
\begin{equation}\label{Commutator;Qab02}
\begin{split}
L_{i}Q_{\alpha\beta}(f,g)&=Q_{\alpha\beta}(L_{i}f,g)+Q_{\alpha\beta}(f,L_{i}g)
-\delta_{i\alpha}Q_{0\beta}(f,g)-\delta_{0\alpha}Q_{i\beta}(f,g)\\&
\quad-\delta_{i\beta}Q_{\alpha 0}(f,g)
-\delta_{0\beta}Q_{\alpha i}(f,g),\quad i=1,2,\\
\Omega_{12}Q_{\alpha\beta}(f,g)&=Q_{\alpha\beta}(\Omega_{12}f,g)+Q_{\alpha\beta}(f,\Omega_{12}g)
+\delta_{2\alpha}Q_{1\beta}(f,g)-\delta_{1\alpha}Q_{2\beta}(f,g)\\&
\quad+\delta_{2\beta}Q_{\alpha 1}(f,g)-\delta_{1\beta}Q_{\alpha 2}(f,g).
\end{split}
\end{equation}
Note that
\begin{equation*}
\begin{split}
\p_{\mu}Q_{\alpha\beta}(f,g)&=Q_{\alpha\beta}(\p_{\mu}f,g)+Q_{\alpha\beta}(f,\p_{\mu}g).
\end{split}
\end{equation*}
This, together with \eqref{Commutator;Qab02}, yields \eqref{Commutator;Qab}.

In addition, \eqref{Commutator;CC} follows  from \eqref{Commutator;Qab01} directly.
\end{proof}

The following lemma means that the vector fields $Z^a$ commute with the wave operator $\Box$ and the resulting nonlinearities
still fulfill the $Q_{\alpha\beta}$-type null forms for quadratic part, and the general partial null conditions for cubic part.
\begin{lemma}\label{lem:eqn:high}
Let $u(t,x,y)$ be a smooth solution of \eqref{QWE}.
Then for any multi-index $a$, $Z^au$ satisfies
\begin{equation}\label{eqn:high}
\begin{split}
\Box Z^au^i&=\sum_{j,k=1}^m\sum_{\alpha,\beta=0}^2\sum_{b+c\le a}\Big\{\sum_{\gamma=0}^3C^{\alpha\beta\gamma}_{ijk,abc}
Q_{\alpha\beta}(\p_{\gamma}Z^bu^j,Z^cu^k)
+C^{\alpha\beta}_{ijk,abc}Q_{\alpha\beta}(Z^bu^j,Z^cu^k)\Big\}\\
&+\sum_{j,k,l=1}^m\sum_{b+c+d\le a}\sum_{\alpha,\beta,\gamma=0}^3Q_{ijkl,abcd}^{\alpha\beta\gamma}\p_{\alpha}Z^bu^j\p_{\beta}Z^cu^k\p_{\gamma}Z^du^l,
\end{split}
\end{equation}
where $C_{ijk,abc}^{\alpha\beta\gamma}$, $C_{ijk,abc}^{\alpha\beta}$ and $Q_{ijkl,abcd}^{\alpha\beta\gamma}$ are constants.
In addition, $C_{ijk,aa0}^{\alpha\beta\gamma}=C_{ijk}^{\alpha\beta\gamma}$ and for any $\xi=(\xi_0,\xi_1,\xi_2)\in\{\pm1,\SS^1\}$,
\begin{equation}\label{null:high}
\sum_{\alpha,\beta,\gamma=0}^2 Q_{ijkl,abcd}^{\alpha\beta\gamma}\xi_\alpha\xi_\beta\xi_\gamma\equiv0.
\end{equation}
\end{lemma}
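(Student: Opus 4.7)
The plan is to reduce everything to an induction on $|a|$. Since each $Z\in\{\p_\alpha,L_1,L_2,\Omega_{12},\p_y\}$ is Killing for the wave operator $\Box=\p_t^2-\Delta_x-\p_y^2$ on $\R^{1+2}\times\T$, a direct check yields $[\Box,Z]=0$, and therefore
\begin{equation*}
\Box Z^a u^i=Z^a\Box u^i=Z^a G^i(\p u,\p^2u).
\end{equation*}
It then suffices to show by induction on $|a|$ that $Z^a G^i$ admits the decomposition on the right-hand side of \eqref{eqn:high}, with the identification $C^{\alpha\beta\gamma}_{ijk,aa0}=C^{\alpha\beta\gamma}_{ijk}$ and with \eqref{null:high} for the cubic coefficients. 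The base case $|a|=0$ is immediate from \eqref{QWE} and \eqref{cubic:null:condtion}.

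For the inductive step I would write $Z^a=ZZ^{a'}$ with $|a'|=|a|-1$ and distribute the outer $Z$ across the decomposition of $Z^{a'}G^i$ provided by the induction hypothesis. On the quadratic part, Lemma \ref{lem:commut} allows me to commute $Z$ through each $Q_{\alpha\beta}$ null form and through each inner $\p_\gamma$: \eqref{Commutator;Qab} converts $ZQ_{\alpha\beta}(\cdot,\cdot)$ into a finite linear combination of $Q_{\mu\nu}$ forms in which $Z$ lands on exactly one factor, and \eqref{Commutator;CC} rewrites $Z\p_\gamma$ as $\p_\gamma Z$ plus a linear combination of $\p_\beta$'s. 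The outcome is again of the shape displayed in \eqref{eqn:high}, and the identification $C^{\alpha\beta\gamma}_{ijk,aa0}=C^{\alpha\beta\gamma}_{ijk}$ follows by tracking the unique ``top-order'' branch in which every $Z$ lands on the first factor and no commutator correction is ever invoked. For the cubic part, Leibniz together with \eqref{Commutator;CC} expresses $Z(\p_\alpha Z^b u^j\,\p_\beta Z^c u^k\,\p_\gamma Z^d u^l)$ as a finite sum of monomials of the same shape but with $|b|+|c|+|d|$ raised by one, defining the new coefficients $Q^{\alpha\beta\gamma}_{ijkl,abcd}$.

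The main obstacle is to verify that these new cubic coefficients still satisfy \eqref{null:high}. The point is that at each application of $Z$ the symbol transforms by an infinitesimal Minkowskian symmetry. Writing the symbol as
\begin{equation*}
F(\xi):=\sum_{\alpha,\beta,\gamma=0}^2 Q^{\alpha\beta\gamma}_{ijkl}\,\xi_\alpha\xi_\beta\xi_\gamma,\qquad \xi=(\xi_0,\xi_1,\xi_2),
\end{equation*}
Leibniz together with the relation $[Z,\p_\alpha]=\sum_\beta a^{Z,\beta}_\alpha\p_\beta$ produces, apart from three monomials on which the coefficient array is inherited verbatim, a single correction monomial whose symbol equals $V_Z F(\xi)$, where $V_Z:=\sum_{\alpha,\beta=0}^2 a^{Z,\beta}_\alpha\xi_\beta\p_{\xi_\alpha}$. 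Reading \eqref{Commutator;Qab01} off directly gives, up to sign, $V_{L_i}=\xi_0\p_{\xi_i}+\xi_i\p_{\xi_0}$ (a boost generator), $V_{\Omega_{12}}=\xi_1\p_{\xi_2}-\xi_2\p_{\xi_1}$ (a rotation generator), and $V_Z=0$ when $Z\in\{\p_\mu,\p_y\}$. Both $V_{L_i}$ and $V_{\Omega_{12}}$ are tangent to the $2$-D null cone $\{\xi_0^2=\xi_1^2+\xi_2^2\}$ (a one-line verification against its defining normal), so the induction hypothesis $F|_{\mathrm{cone}}\equiv 0$ forces $V_Z F|_{\mathrm{cone}}\equiv 0$, and the analogous statement propagates to every multi-index $(b,c,d)$ of subsequent orders. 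Iterating yields \eqref{null:high} for all $a$, completing the induction.
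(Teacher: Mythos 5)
Your argument is correct and follows essentially the same route as the paper, which simply invokes $[Z,\Box]=0$, Lemma \ref{lem:commut}, and Lemma 6.6.5 of H\"ormander's book; your symbol computation with the vector fields $V_{L_i}$, $V_{\Omega_{12}}$ tangent to the null cone is precisely the content of that cited lemma, here written out explicitly.
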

\begin{proof}
Lemma \ref{lem:eqn:high} follows directly from the fact of $[Z,\Box]=0$, Lemma \ref{lem:commut} and Lemma 6.6.5 of \cite{Hormander97book}.
\end{proof}

Under the null conditions, we now illustrate some basic estimates which will play important roles
in proving the global existence of classical solutions to problem \eqref{QWE}.
\begin{lemma}\label{lem:null}
Suppose that the constants $N^{\alpha\beta\gamma}$ satisfy that for any $\xi=(\xi_0,\xi_1,\xi_2)\in\{\pm1,\SS^1\}$,
\begin{equation*}
\sum_{\alpha,\beta,\gamma=0}^2N^{\alpha\beta\gamma}\xi_\alpha\xi_\beta\xi_\gamma\equiv0.
\end{equation*}
Then for smooth functions $f,g$ and $h$ on $\R^2\times\T$, it holds that
\begin{equation}\label{null:structure}
\begin{split}
\w{t+|x|}|Q_{\alpha\beta}(f,g)|&\ls|\Gamma f||\p_{t,x} g|+|\p_{t,x} f||\Gamma g|,\\
|Q_{\alpha\beta}(f,g)|&\ls|\bar\p f||\p g|+|\p f||\bar\p g|,\\
\Big|\sum_{\alpha,\beta,\gamma=0}^2N^{\alpha\beta\gamma}\p^2_{\alpha\beta}f\p_{\gamma}g\Big|
&\ls|\bar\p\p f||\p g|+|\p^2 f||\bar\p g|,\\
\Big|\sum_{\alpha,\beta,\gamma=0}^2N^{\alpha\beta\gamma}\p_{\alpha}f\p_{\beta}g\p_{\gamma}h\Big|
&\ls|\bar\p f||\p g||\p h|+|\p f||\bar\p g||\p h|+|\p f||\p g||\bar\p h|,
\end{split}
\end{equation}
where $\ds|\bar\p f|:=\Big(\sum_{i=1}^2|\bar\p_if|^2\Big)^{1/2}$.
\end{lemma}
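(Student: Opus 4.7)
These four inequalities are pointwise, algebraic statements, and I would establish them in an order that isolates where the null hypothesis is actually used.

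I would begin with the second inequality, which requires no structural assumption on $N^{\alpha\beta\gamma}$. When $|x|>0$, set $\omega_i = x_i/|x|$ and use $\p_i = \bar\p_i - \omega_i\p_t$ for $i=1,2$. Substituting into $Q_{\alpha\beta}(f,g) = \p_\alpha f\,\p_\beta g - \p_\beta f\,\p_\alpha g$ and expanding, the diagonal contributions $\omega_\alpha\omega_\beta(\p_t f)(\p_t g)$ cancel by antisymmetry in $(\alpha,\beta)$, leaving only terms with at least one $\bar\p f$ or $\bar\p g$ factor. At $|x|=0$ the bound is trivial by the convention $\bar\p=\p_{t,x}$. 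For the first inequality I would follow the classical argument of \cite{Sogge95}, Lem.~3.3: use $\p_i = (L_i - x_i\p_t)/t$ on $\{t>0\}$, and $\p_1 = (x_1/r)\p_r - (x_2/r^2)\Omega_{12}$, $\p_2 = (x_2/r)\p_r + (x_1/r^2)\Omega_{12}$ on $\{r>0\}$. Substituting each representation into $Q_{\alpha\beta}$ and noting that the $\p_t\p_t$ and $\p_r\p_r$ diagonals cancel by antisymmetry, I obtain two pointwise bounds by $t^{-1}$ and $r^{-1}$ times $|\Gamma f||\p_{t,x}g| + |\p_{t,x}f||\Gamma g|$; taking the minimum gives the $\w{t+|x|}^{-1}$ factor, with the region $t+r\le 1$ handled by the trivial estimate $|Q|\le|\p f||\p g|$.

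For the third and fourth inequalities the null hypothesis enters in a clean way. Fix $|x|>0$ and set $\lambda = (1,-\omega_1,-\omega_2)$, so that $\lambda \in \{+1\}\times\SS^1$ and $\sum_{\alpha,\beta,\gamma=0}^2 N^{\alpha\beta\gamma}\lambda_\alpha\lambda_\beta\lambda_\gamma = 0$ by assumption. Decompose $\p_\alpha = \lambda_\alpha\p_t + B_\alpha$ with $B_0 = 0$ and $B_i = \bar\p_i$ for $i=1,2$. The fourth inequality follows by expanding the product $(\lambda_\alpha\p_t f + B_\alpha f)(\lambda_\beta\p_t g + B_\beta g)(\lambda_\gamma\p_t h + B_\gamma h)$ against $N^{\alpha\beta\gamma}$: the pure-bad coefficient vanishes by the null condition, while each remaining term contains at least one $B_\bullet = \bar\p_\bullet$ factor, giving the three-term right-hand side. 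For the third inequality, apply the decomposition to the outer derivative first, then commute $\p_t$ past $\p_\beta$ to write
\[
\p_\alpha\p_\beta f = \lambda_\alpha\lambda_\beta\p_t^2 f + \lambda_\alpha\bar\p_\beta\p_t f + \bar\p_\alpha\p_\beta f.
\]
Multiplying by $\p_\gamma g = \lambda_\gamma\p_t g + \bar\p_\gamma g$, summing against $N^{\alpha\beta\gamma}$, and using the null condition to kill the unique pure-bad term $\sum N^{\alpha\beta\gamma}\lambda_\alpha\lambda_\beta\lambda_\gamma\,\p_t^2 f\,\p_t g$, each of the five surviving products is bounded by either $|\bar\p\p f||\p g|$ or $|\p^2 f||\bar\p g|$, as required.

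\emph{Main obstacle.} The only subtle point is the order of decomposition in the third inequality: if one instead writes $\p_\beta f = \lambda_\beta\p_t f + \bar\p_\beta f$ first and then applies $\p_\alpha$, the product rule generates commutator terms $(\p_\alpha\omega_\beta)\p_t f$ with $O(1/r)$ coefficients that are not absorbed by the null cancellation. Decomposing the outer derivative first, exploiting $[\p_t,\p_\beta]=0$, and only then decomposing $\p_\beta\p_t f$ avoids these singularities entirely and is what makes the proof go through cleanly.
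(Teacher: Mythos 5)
Your proof is correct, and it supplies in full detail the standard arguments that the paper merely delegates to its references (Sogge's Lemma 3.3 for the first inequality, Alinhac's book and Hou--Yin for the rest): the $\p_\alpha=\lambda_\alpha\p_t+\bar\p_\alpha$ decomposition with $\lambda=(1,-\omega)$ and the null-condition cancellation of the pure $\p_t$ term is exactly the intended mechanism. Your remark about decomposing the outer derivative first in the third inequality, so that no derivative falls on $\omega$ and no $O(1/r)$ commutators appear, is a genuine and correctly handled subtlety.
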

\begin{proof}
For the proof of the first inequality in \eqref{null:structure}, one can
see \cite[Lem 3.3]{Sogge95}. The proofs on the second to fourth inequalities in \eqref{null:structure}
are analogous to those in Section 9.1 of \cite{Alinhac:book} and \cite[Lem 2.2]{HouYin20jde}, respectively,
we omit the details here.
\end{proof}

Finally, we list some inequalities including the Poincar\'{e} inequality and Hardy inequality
which are appeared in Lemma 2.6 of \cite{HTY24arXiv}.
\begin{lemma}
For any function $f(y)$, $y\in\T$, it holds that
\begin{equation}\label{Poincare:ineq}
\|P_{\neq0}f\|_{L^2(\T)}=\|f-\bar f\|_{L^2(\T)}\ls\|\p_yf\|_{L^2(\T)},\quad\bar f:=\frac{1}{2\pi}\int_{\T}f(y)dy.
\end{equation}
For any function $g(t,x)\in L^2_x(\Bbb R^2)$, $x\in\R^2$, one has
\begin{equation}\label{mod:Hardy:ineq}
\Big\|\frac{g\chi}{\w{|x|-t}}\Big\|_{L^2(\R^2)}\ls \w{t}^{1/2}\ln(2+t)\|\nabla_xg\|_{L^2(\R^2)},
\end{equation}
where $\chi=\chi(\frac{|x|}{\w{t}})$, $\chi(s)\in C^\infty(\R)$, $0\le\chi(s)\le1$ and
\begin{equation}\label{cutoff:def}
\chi(s)=\left\{
\begin{aligned}
&1,\qquad s\in[1/2,2],\\
&0,\qquad s\not\in[1/3,3].
\end{aligned}
\right.
\end{equation}
\end{lemma}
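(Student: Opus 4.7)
For the Poincar\'e inequality on $\T$, the plan is a one-line computation via Fourier series: expand $f(y)=\sum_{n\in\Z} c_n e^{iny}$, observe that $\bar f = c_0$ and $P_{\neq0}f=\sum_{n\neq0}c_n e^{iny}$, and apply Parseval. Since $\|P_{\neq0}f\|_{L^2(\T)}^2=2\pi\sum_{n\neq 0}|c_n|^2$ while $\|\p_y f\|_{L^2(\T)}^2=2\pi\sum_{n}n^2|c_n|^2\ge 2\pi\sum_{n\neq 0}|c_n|^2$, the inequality follows with constant $1$.

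For the modified Hardy inequality, I would pass to polar coordinates $x=r\omega$ with $\omega\in\SS^1$ and set $G(r,\omega):=g(r\omega)\chi(r/\w{t})$. By the support property of $\chi$, the function $G(\cdot,\omega)$ is compactly supported in $r\in[\w{t}/3,3\w{t}]$. The fundamental theorem of calculus and Cauchy--Schwarz then yield, for every $r$ in this interval, the pointwise bound
\begin{equation*}
|G(r,\omega)|^2\le (3\w{t})\int_{\w{t}/3}^{3\w{t}}|\p_s G(s,\omega)|^2\,ds.
\end{equation*}
Multiplying by $r/\w{r-t}^2$, integrating in $r\in[\w{t}/3,3\w{t}]$ against the bounded integral $\int_0^\infty \w{r-t}^{-2}\,dr\le \pi$, using $r\le 3\w{t}$ on the support, and then integrating in $\omega$ produces
\begin{equation*}
\Big\|\f{g\chi}{\w{|x|-t}}\Big\|_{L^2_x}^2\ls \w{t}\,\|\p_r G\|_{L^2_x}^2.
\end{equation*}
Since $\p_r G=(\omega\cdot\nabla_x g)\chi+(g/\w{t})\,\chi'(\cdot/\w{t})$, the first piece contributes at most $\|\nabla_x g\|_{L^2(\R^2)}$, while the second is supported on the transition annulus where $|x|\sim\w{t}$ and $||x|-t|\sim\w{t}$.

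The main obstacle is the analysis of the cutoff-error term $(g/\w{t})\chi'(\cdot/\w{t})$: there is no a priori pointwise or $L^2$ control of $g$ itself (only of $\nabla_x g$), so one must borrow information. The natural route is a 2-D Hardy--Poincar\'e-type estimate on the annulus $\{|x|\sim\w{t}\}$, which in two space dimensions produces exactly the logarithmic correction $\log(2+t)$ that appears in the statement. Concretely, I would dyadically decompose the interval $[\w{t}/3,3\w{t}]$ into $O(\log(2+t))$ sub-annuli of size $O(1)$ (or use a logarithmic weight along the radial direction), apply a standard 1-D Poincar\'e-Hardy estimate on each piece, and sum with Cauchy--Schwarz so that the number of pieces enters as the extra $\log(2+t)$ factor. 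Combining this bound for $\|g\chi'/\w{t}\|_{L^2}\ls \log(2+t)\,\|\nabla_x g\|_{L^2}$ with the previous display completes the proof. The delicate point to verify is that the logarithmic loss appears only to the first power and that the prefactor $\w{t}^{1/2}$ is preserved throughout the dyadic summation.
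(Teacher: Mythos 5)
Your Fourier-series proof of \eqref{Poincare:ineq} is correct, and so is your reduction of \eqref{mod:Hardy:ineq} to $\bigl\|\tfrac{g\chi}{\w{|x|-t}}\bigr\|_{L^2}^2\ls\w{t}\,\|\p_rG\|_{L^2(\R^2)}^2$: the fundamental theorem of calculus on the radial support $[\w{t}/3,3\w{t}]$, the uniform bound $\int_0^\infty\w{r-t}^{-2}dr\le\pi$, and the equivalence $r\approx\w{t}$ there are all used legitimately. (For calibration: the paper does not prove this lemma at all; it is quoted from Lemma~2.6 of \cite{HTY24arXiv}.)

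The gap is exactly where you locate it, and it is not repairable by the route you propose. To absorb the cutoff error you need $\|g\,\chi'(|x|/\w{t})\|_{L^2(\R^2)}\ls\w{t}\ln(2+t)\|\nabla_xg\|_{L^2(\R^2)}$, i.e.\ control of the $L^2$ norm of $g$ on the transition annulus by the global $\dot H^1$ norm alone. No such bound holds with any constant $C(t)$: take $g_R(x)=1$ for $|x|\le10\w{t}$, $g_R(x)=\ln(R/|x|)/\ln(R/(10\w{t}))$ for $10\w{t}\le|x|\le R$, and $g_R=0$ otherwise. Then $g_R\in L^2\cap\dot H^1(\R^2)$ with $\|\nabla_xg_R\|_{L^2}^2=2\pi/\ln(R/(10\w{t}))\to0$ as $R\to\infty$, while $\|g_R\|_{L^2(\{|x|\approx\w{t}\})}\approx\w{t}$ does not move. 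This is the classical failure of the critical Hardy/Poincar\'e inequality in two dimensions (balls have vanishing capacity relative to infinity), and it explains why your dyadic chain of one-dimensional Poincar\'e estimates cannot close: each step controls only the deviation of $g$ from its average on a sub-annulus, the averages themselves have no anchor, and reaching a region where $g\in L^2$ forces the average to be small costs $\ln(R/\w{t})$ steps for the (arbitrary) effective support radius $R$ of $g$, not $\ln(2+t)$ steps.

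The same example shows that \eqref{mod:Hardy:ineq}, read literally for arbitrary $g\in L^2_x$ with $\nabla_xg\in L^2_x$, cannot hold with a constant independent of $g$: for $g=g_R$ the left side is bounded below (it is $\gtrsim\w{t}^{1/2}$ for $t\ge2$) while the right side tends to $0$ as $R\to\infty$. So before repairing the proof you must pin down the target by consulting the precise formulation of Lemma~2.6 in \cite{HTY24arXiv}. What your argument does prove, if you anchor the fundamental theorem of calculus at a radius $\rho$ and then average $\rho$ over an outer annulus $[4\w{t},8\w{t}]$ (where $\int_r^\rho s^{-1}ds=O(1)$, so no logarithm appears), is
\begin{equation*}
\Big\|\frac{g\chi}{\w{|x|-t}}\Big\|_{L^2(\R^2)}\ls\w{t}^{1/2}\|\nabla_xg\|_{L^2(\R^2)}+\w{t}^{-1/2}\|g\|_{L^2(\R^2)},
\end{equation*}
the extra $L^2$ term being the unavoidable anchor; alternatively, under a support assumption $\supp g\subset\{|x|\ls\w{t}\}$ the anchor sits where $g$ vanishes and neither the $L^2$ term nor the logarithm is needed. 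Your write-up should either prove one of these correct variants and check it still suffices for the application in \eqref{aux:energy7}, or reproduce the exact hypotheses of the cited lemma.
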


\subsection{Bootstrap assumptions}\label{sect2-2}

Define the energy for problem \eqref{QWE} as
\begin{equation*}
E_k[u](t):=\sum_{i=1}^m\sum_{|a|\le k}\|\p Z^au^i\|_{L^2(\R^2\times\T)}.
\end{equation*}
We make the following bootstrap assumptions:
\begin{equation}\label{BA1}
E_{2N}[u](t)\le\ve_1(1+t)^{\ve_2}
\end{equation}
and
\begin{align}
&\sum_{|a|\le N+3}|P_{=0}Z^au(t,x,y)|\le\ve_1\w{t+|x|}^{-1/2}\w{t-|x|}^{-0.4},\label{BA2}\\
&\sum_{|a|\le N+2}|P_{=0}\p Z^au(t,x,y)|\le\ve_1\w{x}^{-1/2}\w{t-|x|}^{-1.4},\label{BA3}\\
&\sum_{|a|\le2N-16}|P_{\neq0}Z^au(t,x,y)|\le\ve_1\w{t+|x|}^{-1},\label{BA4}
\end{align}
where $N\ge21$, $\ve_1\in(\ve,1)$ will be determined later and $\ve_2=10^{-3}$ .

By \eqref{proj:property}, \eqref{BA2}-\eqref{BA4}, one can derive the following results.
\begin{corollary}
Let $u(t,x,y)$ be the solution of \eqref{QWE} and suppose that \eqref{BA2}-\eqref{BA4} hold.
Then we have
\begin{align}
&\sum_{|a|\le N+3}|Z^au|\ls\ve_1\w{t+|x|}^{-1/2}\w{t-|x|}^{-0.4},\label{BA:pw:full}\\
&\sum_{|a|\le N+2}|P_{=0}\bar\p Z^au(t,x,y)|\ls\ve_1\w{x}^{-1/2}\w{t+|x|}^{-1}\w{t-|x|}^{-0.4}.\label{pw0:good:low}
\end{align}
\end{corollary}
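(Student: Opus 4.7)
The plan is that both inequalities are direct algebraic consequences of the bootstrap hypotheses \eqref{BA2}--\eqref{BA4}, the projection identities \eqref{proj:property}, and a standard pointwise decomposition of $\bar\p$ in terms of the ``good'' vector fields $L,\Omega$; no dynamical information from \eqref{QWE} is needed. For \eqref{BA:pw:full} I would decompose $Z^au=P_{=0}Z^au+P_{\neq 0}Z^au$ and use that $Z$ commutes with the projections. Since $N\ge 21$ one has $N+3\le 2N-16$, so \eqref{BA2} handles the zero mode and \eqref{BA4} handles the non-zero mode; the two are reconciled by the elementary weight comparison
\begin{equation*}
\w{t+|x|}^{-1} \le \w{t+|x|}^{-1/2}\w{t-|x|}^{-1/2} \le \w{t+|x|}^{-1/2}\w{t-|x|}^{-0.4},
\end{equation*}
where I use $\w{t-|x|}\le\w{t+|x|}$ and $\w{t-|x|}\ge 1$.

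For \eqref{pw0:good:low} the key ingredient is the pointwise identity, valid for $|x|>0$ with $\omega:=x/|x|$ and $\p_r:=\omega\cdot\nabla_x$,
\begin{equation*}
|x|\,(\p_t+\p_r) = \omega\cdot L + (|x|-t)\p_r,
\end{equation*}
together with the two-dimensional tangential bound $|\p_i f-\omega_i\p_r f|\ls |x|^{-1}|\Omega_{12} f|$. Combining these yields, in the region $|x|\ge t/2$,
\begin{equation*}
|\bar\p f| \ls \frac{|Zf|}{t+|x|} + \frac{|t-|x||}{t+|x|}|\p f|.
\end{equation*}
I would apply this to $f=P_{=0}Z^au$, $|a|\le N+2$; since $P_{=0}Z^au$ is independent of $y$, $|Zf|$ only involves $\Gamma$-derivatives and is controlled at order $N+3$ by \eqref{BA2}, while the second term is controlled by \eqref{BA3} together with $|t-|x||\w{t-|x|}^{-1}\le 1$. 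In this region, for $|x|\ge 1$, one has $\w{x}\sim\w{t+|x|}$, and both contributions match the target $\ve_1\w{x}^{-1/2}\w{t+|x|}^{-1}\w{t-|x|}^{-0.4}$.

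In the complementary region $|x|\le t/2$ I would simply use the crude bound $|\bar\p f|\le|\p f|$ with \eqref{BA3}, since $\w{t-|x|}\gtrsim\w{t+|x|}$ there permits $\w{t-|x|}^{-1.4}\le\w{t-|x|}^{-0.4}\w{t+|x|}^{-1}$. The small sub-region $|x|\le 1$ is trivial because all weights are $O(1)$. No serious obstacle is anticipated: this corollary is essentially a pointwise compatibility check between the bootstrap assumptions and the weighted ``good derivative'' bound that feeds into the subsequent ghost-weight energy estimates, and the only bookkeeping required is the interchange of $\w{x}^{-1/2}$ and $\w{t+|x|}^{-1/2}$ across the two regions, which is routine.
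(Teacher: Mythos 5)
Your proposal is correct and takes essentially the same route as the paper: the mode decomposition with the weight comparison $\w{t-|x|}\le\w{t+|x|}$, $\w{t-|x|}\ge1$ gives \eqref{BA:pw:full}, and \eqref{pw0:good:low} is obtained by splitting into the interior region (where $\w{t-|x|}\approx\w{t+|x|}$ and \eqref{BA3} alone suffices) and the region $|x|\gtrsim\w{t}$, where $|x|\bar\p$ is rewritten through the homogeneous vector fields plus $(|x|-t)\p$ and controlled by \eqref{BA2}--\eqref{BA3}. The only cosmetic difference is that the paper uses the componentwise identity $|x|\bar\p_i f=L_if+(|x|-t)\p_if$ directly, which avoids your separate radial/angular ($\p_r$, $\Omega_{12}$) decomposition.
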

\begin{proof}
\eqref{BA:pw:full} can be derived from \eqref{proj:property}, \eqref{BA2} and \eqref{BA4}.

In view of \eqref{BA3} and the fact of $\w{t-|x|}\approx\w{t+|x|}$ for $|x|\le\w{t}/2$, it suffices to prove \eqref{pw0:good:low} in the
region of $|x|\ge\w{t}/2$. According to the definition of $\bar\p_i$, one obtains that for any function $f$,
\begin{equation*}
|x|\bar\p_if=|x|(\frac{x_i}{|x|}\p_t+\p_i)f=L_if+(|x|-t)\p_if.
\end{equation*}
This, together with \eqref{BA2} and \eqref{BA3}, yields
\begin{equation*}
\begin{split}
\w{x}\sum_{|a|\le N+2}|P_{=0}\bar\p Z^au(t,x,y)|
&\ls\sum_{|b|\le N+3}|P_{=0}Z^bu|+\w{t-|x|}\sum_{|a|\le N+2}|P_{=0}\p Z^au|\\
&\ls\ve_1\w{t+|x|}^{-1/2}\w{t-|x|}^{-0.4}+\ve_1\w{x}^{-1/2}\w{t-|x|}^{-1.4},
\end{split}
\end{equation*}
which derives \eqref{pw0:good:low} immediately.
\end{proof}

\section{Pointwise estimates}\label{sect33}

This section aims to improve the bootstrap assumptions \eqref{BA2} and \eqref{BA4}.

\subsection{Higher order pointwise estimates of the zero modes}

\begin{lemma}\label{lem:Sobolev-0}[Lemma 3.1 of \cite{HTY24arXiv}]
Let $u(t,x,y)$ be the smooth solution of \eqref{QWE}, then for any multi-index $a$ with $|a|\le2N-2$, it holds that
\begin{equation}\label{pw:high}
\begin{split}
\w{x}^{1/2}|P_{=0}\p Z^au(t,x,y)|&\ls E_{|a|+2}[u](t),\\
\w{x}^{1/2}|P_{\neq0}Z^au(t,x,y)|&\ls E_{|a|+2}[u](t).
\end{split}
\end{equation}
\end{lemma}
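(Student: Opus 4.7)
The plan is to deduce both inequalities from a fixed-time two-dimensional weighted Sobolev inequality in $x$, combined with the projection identities \eqref{proj:property} and the Poincar\'{e} inequality \eqref{Poincare:ineq}. The main preparatory step is to establish that, for any smooth $g(t,x)$ on $\R^2$ with adequate decay,
\begin{equation*}
\w{x}^{1/2}|g(t,x)|\ls\sum_{|b|\le2}\|\Gamma^bg(t,\cdot)\|_{L^2(\R^2)},
\end{equation*}
where only $\Gamma=\{\p_{t,x},L_1,L_2,\Omega_{12}\}$ appears on the right---crucially, no scaling vector $S$. This is obtained by the standard polar-coordinate argument: an $H^1(\SS^1)$ Sobolev embedding on circles of radius $r$, whose angular derivative is realized by $\Omega_{12}$, is paired with a radial fundamental-theorem-of-calculus step in which the weight $\w{x}^{1/2}$ is produced by the boosts $L_i$ (the $t\p_i$ piece contributes the $r$-weight, while the $x_i\p_t$ piece is absorbed by $\p_t\in\Gamma$).

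For the first estimate, apply the inequality above to $g=P_{=0}\p Z^au$, which is a $y$-independent function of $(t,x)$. Using $P_{=0}Z=ZP_{=0}$ from \eqref{proj:property}, the trivial bound $\|h\|_{L^2_x}\ls\|h\|_{L^2_{x,y}}$ for $y$-independent $h$, and the commutator identity $[Z,\p_\alpha]=O(\p)$ from \eqref{Commutator;CC}, the right-hand side is $\ls\sum_{|c|\le|a|+2}\|\p Z^cu\|_{L^2_{x,y}}=E_{|a|+2}[u](t)$. For the second estimate, apply the two-dimensional inequality to $g(x)=P_{\neq0}Z^au(t,x,y)$ at each fixed $y\in\T$ and commute $\Gamma^b$ through $P_{\neq0}Z^a$ to obtain
\begin{equation*}
\w{x}^{1/2}|P_{\neq0}Z^au(t,x,y)|\ls\sum_{|b|\le2}\|P_{\neq0}Z^{b+a}u(t,\cdot,y)\|_{L^2_x}.
\end{equation*}
To eliminate the pointwise dependence on $y$, apply the standard one-dimensional Sobolev embedding on $\T$ to $h(y):=\|P_{\neq0}Z^{b+a}u(t,\cdot,y)\|^2_{L^2_x}$, estimating $\|h\|_{L^\infty(\T)}\ls\|h\|_{L^1(\T)}+\|\p_yh\|_{L^1(\T)}$; Cauchy--Schwarz together with the Poincar\'{e} inequality \eqref{Poincare:ineq} convert both terms into $\ls\|\p_yZ^{b+a}u\|^2_{L^2_{x,y}}$. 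Since $[Z,\p_y]=0$ and $\p_y$ is a component of $\p$, this is in turn $\ls E_{|a|+2}[u](t)$.

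The main obstacle is the fixed-time two-dimensional weighted Sobolev inequality in the first step: the forbidden scaling vector $S$ is precisely what normally supplies the $\w{x}^{1/2}$ (and even $\w{t-|x|}^{1/2}$) weight in the classical Klainerman--Sobolev bound, so here one must generate $\w{x}^{1/2}$ using only $\Gamma$, which forces the careful angular--radial decomposition described above. Since this is exactly the estimate carried out as Lemma~3.1 of \cite{HTY24arXiv}, I would simply import that result rather than reproduce the calculation in full.
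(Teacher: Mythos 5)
Your overall route matches the paper's: the paper gives no proof here at all, simply importing the statement as Lemma~3.1 of \cite{HTY24arXiv}, and you end by doing the same, so there is nothing to compare at the level of the written argument. Your sketch of what lies behind the cited lemma is also essentially the standard one (a fixed-time weighted Sobolev inequality in $x$ without the scaling field, combined with \eqref{proj:property}, the commutators \eqref{Commutator;CC}, a one-dimensional Sobolev embedding on $\T$, and the Poincar\'e inequality \eqref{Poincare:ineq} for the non-zero modes), and the bookkeeping of vector fields correctly loses exactly two, matching $E_{|a|+2}$.

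One point in your sketch is wrong as stated, though it does not sink the argument. You claim the weight $\w{x}^{1/2}$ in the fixed-time inequality is ``produced by the boosts $L_i$, the $t\p_i$ piece contributing the $r$-weight.'' But at fixed $t$ the radial combination of the boosts is $\tfrac{x_i}{|x|}L_i=|x|\p_t+t\p_r$, which yields $t\p_r$, not $r\p_r$; the operator $r\p_r$ is $S-t\p_t$, i.e.\ it is exactly the forbidden scaling field that would be needed if the weight really had to come from a weighted radial derivative. In fact no weighted derivative is needed: the inequality
$\w{x}^{1/2}|g(x)|\ls\sum_{|b_1|+|b_2|\le2}\|\p_x^{b_1}\Omega^{b_2}g\|_{L^2(\R^2)}$
follows from writing $r\,g(r\omega)^2=-\int_r^\infty\p_\rho\bigl(\rho\,g(\rho\omega)^2\bigr)d\rho$ (the weight comes from the Jacobian $\rho\,d\rho$ of polar coordinates), followed by the $H^1(\SS^1)$ embedding in $\omega$ realized by $\Omega_{12}$, plus the interior Sobolev embedding for $|x|\le1$. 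So the boosts are not needed at all, only $\p_x$ and $\Omega$, both of which sit inside $Z$; with this correction your reduction to $E_{|a|+2}[u](t)$ goes through as you describe.
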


\begin{lemma}\label{lem:Sobolev}
Let $u(t,x,y)$ be the smooth solution of \eqref{QWE}, then for any multi-index $a$ with $|a|\le2N-2$, we have that
\begin{equation}\label{pw0:Sobo}
\begin{split}
&\quad\w{x}^{1/2}\w{t-|x|}^{1/2}|P_{=0}\p Z^au(t,x,y)|
+\w{x}^{1/2}\w{t+|x|}^{1/2}|P_{=0}\bar\p Z^au(t,x,y)|\\
&\ls E_{|a|+2}[u](t)+\cX_{|a|+2}[u](t),\\
&\quad\cX_k[u](t):=\sum_{i=1}^m\sum_{|b|\le k-1}(\|\w{t-|x|}P_{=0}\p^2Z^bu^i\|_{L^2(\R^2)}+\|\w{t+|x|}P_{=0}\bar\p\p Z^bu^i\|_{L^2(\R^2)}).
\end{split}
\end{equation}
\end{lemma}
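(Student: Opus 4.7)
The proof is a weighted Klainerman--Sobolev argument on $\R^2$ for the zero-mode function $P_{=0}\p Z^a u$ (respectively $P_{=0}\bar\p Z^a u$). Since Lemma \ref{lem:Sobolev-0} already supplies the $\w{x}^{1/2}$ weight, the task is to gain an additional $\w{t-|x|}^{1/2}$ (resp.\ $\w{t+|x|}^{1/2}$) weight, and the quantity $\cX_{|a|+2}[u]$ is precisely designed for this purpose through its control on weighted second-derivative $L^2$ norms.

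First I would establish a weighted 2D Sobolev inequality: for smooth $g:\R^2\to\R$ with spatial decay,
\begin{equation*}
r_0^{1/2}\w{t-r_0}^{1/2}|g(r_0\omega_0)|\ls \|g\|_{L^2(\R^2)}+\|\Omega g\|_{L^2(\R^2)}+\|\w{t-|x|}\p_r g\|_{L^2(\R^2)}+\|\w{t-|x|}\p_r\Omega g\|_{L^2(\R^2)}.
\end{equation*}
This is derived in two steps. Radially, from $\w{t-r_0}|g(r_0,\theta)|^2=-\int_{r_0}^\infty\p_r[\w{t-r}|g|^2]\,dr$, expanding the derivative and using $|\p_r\w{t-r}|\le 1$ together with Cauchy--Schwarz yields $\w{t-r_0}|g(r_0,\theta)|^2\ls\int_{r_0}^\infty|g|^2\,dr+\int_{r_0}^\infty\w{t-r}^2|\p_r g|^2\,dr$. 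Multiplying by $r_0$ (using $r_0\le r$ on the RHS) converts these into $r\,dr$-integrals, and integration in $\theta$ produces $\R^2$-norms. The 1D angular Sobolev $|h(\theta_0)|^2\ls\int_{\SS^1}(|h|^2+|\Omega h|^2)d\theta$ (using $\Omega=\p_\theta$ in polar coordinates) then gives $L^\infty$ in $\theta_0$. Substituting $g=P_{=0}\p Z^a u$ and noting $\p_r=\omega\cdot\p_x$, the latter two norms become $\|\w{t-|x|}P_{=0}\p^2 Z^{a'}u\|_{L^2}$ with $|a'|\le|a|+1$, bounded by $\cX_{|a|+2}[u]$; the first two are bounded by $E_{|a|+2}[u]$. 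This handles the $\p$-estimate in the region $|x|\ge 1/2$ (where $r_0^{1/2}\approx\w{x}^{1/2}$).

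For the $\bar\p$-estimate I would run the analogous argument with $\w{t-r}$ replaced by $\w{t+r}$ (noting $|\p_r\w{t+r}|\le 1$) applied to $g=P_{=0}\bar\p Z^a u$, using the commutator identity $\p_x\bar\p=\bar\p\p_x+[\p_x,\bar\p]$ with $|[\p_x,\bar\p]f|\ls |x|^{-1}|\p_t f|$. The main term then contributes $\|\w{t+|x|}P_{=0}\bar\p\p Z^{a'}u\|_{L^2}\le\cX_{|a|+2}[u]$; the commutator remainder, singular like $|x|^{-1}$ near the origin, is absorbed by the $\w{x}^{1/2}$ factor on the LHS away from $|x|=0$. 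The remaining region $|x|\le 1$ (where $\w{x}^{1/2}\approx 1$ and $\w{t\pm|x|}\approx\w{t}$) is treated by 2D Sobolev embedding on the ball $B(x,1)$: writing $|g(x)|\ls\sum_{|b|\le 2}\|\p^b g\|_{L^2(B(x,1))}$, the Cauchy--Schwarz interpolation $\|\w{t-|x|}^{1/2}f\|_{L^2}\le\|f\|_{L^2}^{1/2}\|\w{t-|x|}f\|_{L^2}^{1/2}$ applied to the second-derivative terms ($|b|\ge 1$) produces bounds of the form $E_{|a|+2}^{1/2}\cX_{|a|+2}^{1/2}\ls E_{|a|+2}+\cX_{|a|+2}$.

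The main obstacle will be the simultaneous treatment of the region $|x|\le 1$ and the $|x|^{-1}$ singularity from $[\p_x,\bar\p]$ in the $\bar\p$-estimate, where the radial weighted Sobolev degenerates. This requires a careful decomposition $\R^2=\{|x|\le 1\}\cup\{|x|\ge 1/2\}$ with distinct Sobolev arguments in each, together with appropriate use of Lemma \ref{lem:Sobolev-0} at moderate $|x|$, so that all remainder terms are genuinely absorbed into $E_{|a|+2}[u]+\cX_{|a|+2}[u]$ rather than producing an uncontrolled growth factor $\w{t}^{1/2}E$ near the origin.
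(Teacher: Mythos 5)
The paper itself gives no proof here (it quotes Lemma 3.7 of \cite{HTY24}), so your argument must stand on its own. Your exterior mechanism is the right one and essentially complete: the radial identity $\w{t-r_0}|g(r_0,\theta)|^2=-\int_{r_0}^\infty\p_r\big(\w{t-r}|g|^2\big)\,dr$ plus the angular Sobolev embedding on $\SS^1$, applied to $g=P_{=0}\p Z^au$, gives the first estimate for $|x|\gtrsim1$, the weighted radial derivatives being exactly what $\cX_{|a|+2}[u]$ controls. The genuine gap is the region $|x|\ls1$, $t\gg1$, which you flag as ``the main obstacle'' but do not close. In the local embedding $|g(x_0)|\ls\sum_{|b|\le2}\|\p_x^bg\|_{L^2(B_1(x_0))}$, your interpolation $\|\w{t-|x|}^{1/2}f\|_{L^2}\le\|f\|_{L^2}^{1/2}\|\w{t-|x|}f\|_{L^2}^{1/2}$ only applies for $|b|\ge1$; the $|b|=0$ term yields $\w{t}^{1/2}\|P_{=0}\p Z^au\|_{L^2(B_1)}$, and since the weight $\w{t-|x|}$ in $\cX$ sits only on \emph{second} derivatives, interpolation gives nothing and one is left with the uncontrolled $\w{t}^{1/2}E_{|a|}[u]$. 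The missing ingredients are: (i) because $g$ already carries a $\p$, the quantity $L_jg=\p L_jZ^au+[\,\cdot\,]$ is energy-controlled, so $t\p_jg=L_jg-x_j\p_tg$ yields $\|\nabla_xg\|_{L^2(B_2)}\ls\w{t}^{-1}E_{|a|+2}[u]$; (ii) a Poincar\'e inequality $\|g\|_{L^2(B_2)}\ls\|g\|_{L^2(\{3/2\le|x|\le2\})}+\|\nabla_xg\|_{L^2(B_2)}$ reduces the zeroth-order term to an annulus where your exterior bound gives $|g|\ls\w{t}^{-1/2}(E_{|a|+2}+\cX_{|a|+2})$. Without some such input the argument fails at $x=0$.

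A secondary issue concerns the $\bar\p$ estimate: the commutator $[\p_r,\bar\p]$ produces terms of size $\w{t+r}|x|^{-1}|\p_{t,x}Z^{a'}u|$, which is comparable to $t\,|\p Z^{a'}u|$ throughout the whole intermediate region $1\ls|x|\ll t$, not only near the origin; this is a weighted \emph{first} derivative, controlled neither by $E$ nor by $\cX$, so it is not ``absorbed by the $\w{x}^{1/2}$ factor.'' The clean fix is to observe that $\w{t+|x|}\approx\w{t-|x|}$ for $|x|\le t/2$, so there the second estimate follows from the first via $|\bar\p f|\ls|\p f|$, and to run the $\w{t+r}$-weighted radial argument only for $|x|\ge t/2$, where $\w{t+r}\approx|x|$ and every $|x|^{-1}$ loss from the commutator is harmless.
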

\begin{proof}
This lemma follows easily from Lemma 3.7 of \cite{HTY24}, we omit details here.
\end{proof}
To apply Lemma \ref{lem:Sobolev}, it is required to derive the estimate of $\cX_{2N}[u](t)$
on the right hand side of \eqref{pw0:Sobo}.
\begin{lemma}
Let $u(t,x,y)$ be the smooth solution of \eqref{QWE}, one then has
\begin{equation}\label{aux:energy}
\cX_{2N}[u](t)\ls\ve_1\w{t}^{2\ve_2}.
\end{equation}
\end{lemma}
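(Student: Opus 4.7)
The plan is to set up a weighted $L^2$ energy estimate for the two-dimensional wave equations satisfied by the zero modes $v_b^i := P_{=0} Z^b u^i$ with $|b| \le 2N-1$, and then to estimate the resulting source using the null structure of Lemma \ref{lem:null}. Projecting \eqref{eqn:high} onto the zero mode and using $P_{=0}\p_y^2=0$ yields $\Box_{t,x} v_b^i = P_{=0} F_b^i$, where $F_b^i$ is the right-hand side of \eqref{eqn:high}, a sum of null quadratic and null cubic expressions in $Z^{\le |b|}u$.

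For the first summand $\|\w{t-|x|}P_{=0}\p^2 Z^b u^i\|_{L^2(\R^2)}$, the algebraic identities $L_j = t\bar\p_j - (t-|x|)\omega_j \p_t$ and $|x|\bar\p_j = L_j - (t-|x|)\p_j$, derived directly from the definitions of $L_j$ and $\bar\p$ with $\omega_j = x_j/|x|$, yield the pointwise bound
\[
\w{t-|x|}|\p f| \lesssim |Lf| + \w{t+|x|}|\bar\p f|
\]
for any smooth $f$ on $\R^2$. Applied to $f = \p Z^b u^i$ and combined with $|L\p Z^b u^i| \lesssim |\p Z^{\le |b|+1}u^i|$, this gives
\[
\|\w{t-|x|}P_{=0}\p^2 Z^b u^i\|_{L^2(\R^2)} \lesssim E_{2N}[u](t) + \|\w{t+|x|}P_{=0}\bar\p\p Z^b u^i\|_{L^2(\R^2)},
\]
reducing the task to controlling the second summand.

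For the second summand, I apply an Alinhac-type ghost weight energy estimate to the differentiated equation $\Box_{t,x}(\p_\gamma v_b^i) = P_{=0}\p_\gamma F_b^i$ for $\gamma = 0,1,2$, using the multiplier $e^{q(|x|-t)}\w{t+|x|}^2 \p_t(\p_\gamma v_b^i)$, where $q$ is a bounded non-decreasing weight with $q'(s)\sim (1+|s|)^{-1-\delta}$. Integration by parts produces a boundary term majorizing $\|\w{t+|x|}P_{=0}\bar\p\p Z^b u^i\|_{L^2(\R^2)}^2$, a positive spacetime bulk term on good derivatives, and a source contribution of the form $\int_0^t \|\w{t+|x|}P_{=0}\p_\gamma F_b^i\|_{L^2_x}\,\|\w{t+|x|}P_{=0}\p_t\p_\gamma v_b^i\|_{L^2_x}\,d\tau$. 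Each term in $F_b^i$ is estimated via Lemma \ref{lem:null}, which extracts at least one good derivative $\bar\p$ from each quadratic $Q_{\alpha\beta}$-form and from each cubic null expression; the highest-derivative factor is placed in $L^2$ (bounded by $E_{2N}[u]\le\ve_1\w{\tau}^{\ve_2}$) and the remaining low-order factors in $L^\infty$ via the bootstrap pointwise bounds \eqref{BA:pw:full}, \eqref{BA3}, \eqref{BA4}, and in particular \eqref{pw0:good:low}, which supplies the sharpened $\w{x}^{-1/2}\w{t+|x|}^{-1}\w{t-|x|}^{-0.4}$ decay for good derivatives acting on zero modes.

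The main obstacle is the high-order interaction involving the non-zero modes, such as $P_{=0}(P_{\neq 0}f\cdot P_{\neq 0}g)$ from \eqref{proj:property}, where the good-derivative gain is not directly available on the zero mode side; here the Poincaré inequality \eqref{Poincare:ineq} converts each $P_{\neq 0}$ factor into one with an extra $\p_y$ derivative (and hence $L^2$-gain), while the modified Hardy inequality \eqref{mod:Hardy:ineq} absorbs the $\w{t-|x|}^{-1}$-singular cone contributions that would otherwise obstruct $L^2$ closure. The $\w{t-|x|}^{-0.4}$-integrability from \eqref{pw0:good:low}, combined with at most $\w{\tau}^{\ve_2}$-growth from $E_{2N}$, produces a bilinear source bound of magnitude $\ve_1^2\w{\tau}^{2\ve_2-1+\delta'}$ for some $\delta'<\ve_2$, and integration in $\tau$ followed by a standard Gronwall argument yields $\cX_{2N}[u](t)\lesssim\ve_1\w{t}^{2\ve_2}$, as required.
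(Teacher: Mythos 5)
Your reduction of the first summand is fine as algebra: from $t\bar\p_j=L_j+(t-|x|)\omega_j\p_t$ and $|x|\bar\p_j=L_j-(t-|x|)\p_j$ one indeed gets $\w{t-|x|}|\p f|\ls|\Gamma f|+|\p f|+\w{t+|x|}|\bar\p f|$, so everything hinges on bounding $\|\w{t+|x|}P_{=0}\bar\p\p Z^bu\|_{L^2(\R^2)}$ at fixed time. This is where the proposal breaks down. A ghost-weight estimate with multiplier $e^{q}\w{t+|x|}^2\p_t(\p_\gamma v_b)$ cannot produce that bound. Writing $a=e^{q(|x|-t)}\phi(t+|x|)$ with $\phi(s)=\w{s}^2$, the energy identity gives
\begin{equation*}
\frac{d}{dt}\Big(\frac12\int a|\p w|^2dx\Big)
=-\int\frac{e^qq'\phi}{2}\big(|(\p_t+\p_r)w|^2+|\nabla\mkern-11mu/\,w|^2\big)dx
+\int\frac{e^q\phi'}{2}\big(|(\p_t-\p_r)w|^2+|\nabla\mkern-11mu/\,w|^2\big)dx
+\int a\,\p_tw\,\Box_{t,x}w\,dx,
\end{equation*}
so besides the good ghost bulk term you generate the positive term with weight $\phi'\approx t+|x|$ acting on the \emph{bad} derivative $(\p_t-\p_r)w$. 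It cannot be absorbed by the ghost term, and Gronwall only yields $\int a|\p w|^2\ls\w{t}^2$. This is not a technical loss: the boundary term you rely on controls the full gradient with weight $\w{t+|x|}^2$, and $\|\w{t+|x|}\p^2u(t)\|_{L^2(\R^2)}$ grows linearly in $t$ already for free waves, so no bootstrap refinement of the source can give it the bound $\ve_1\w{t}^{2\ve_2}$. Only the combination ``good derivative times $\w{t+|x|}$'' (or ``full second derivative times $\w{t-|x|}$'') is uniformly controllable, and isolating it requires using the equation at fixed time, not a weighted energy estimate in time.

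The paper's proof avoids this entirely: it invokes the fixed-time weighted $L^2$ estimate of Lemma 5.1 in \cite{HTY24} (a Klainerman--Sideris-type inequality without the scaling field),
\begin{equation*}
\|\w{t-|x|}\p_{t,x}^2f\|_{L^2(\R^2)}+\|\w{t+|x|}\bar\p\p_{t,x}f\|_{L^2(\R^2)}
\ls\sum_{|a|\le1}\|\p_{t,x}\Gamma^af\|_{L^2(\R^2)}+\|\w{t+|x|}\Box_{t,x}f\|_{L^2(\R^2)},
\end{equation*}
which bounds $\cX_{2N}[u](t)$ by $E_{2N}[u](t)+\sum_{|a|\le2N-1}\|\w{t+|x|}\Box Z^au(t)\|_{L^2_{x,y}}$, and then estimates the weighted nonlinearity at fixed $t$ using the null-form inequality \eqref{null:structure}, the bootstrap bounds, the Poincar\'e inequality and the Hardy inequality \eqref{mod:Hardy:ineq}; no time integration or Gronwall argument is needed, and the ghost weight only enters later in the $E_{2N}$ energy estimate. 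Your treatment of the source terms (splitting low/high orders, Poincar\'e for non-zero modes, Hardy near the light cone) is in the right spirit and close to the paper's, but without the fixed-time estimate above (or an equivalent integration-by-parts-in-$x$ argument using $\Box_{t,x}$) the central quantity $\|\w{t+|x|}P_{=0}\bar\p\p Z^bu\|_{L^2}$ is never actually controlled, so the proof as proposed does not close.
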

\begin{proof}
Recall Lemma 5.1 of \cite{HTY24}, for smooth function $f(t,x)$, $x\in\R^2$, it holds that
\begin{equation*}
\|\w{t-|x|}\p_{t,x}^2f\|_{L^2(\R^2)}+\|\w{t+|x|}\bar\p\p_{t,x}f\|_{L^2(\R^2)}
\ls\sum_{|a|\le1}\|\p_{t,x}\Gamma^af\|_{L^2(\R^2)}+\|\w{t+|x|}\Box_{t,x}f\|_{L^2(\R^2)}.
\end{equation*}
This, together with \eqref{proj:property}, yields
\begin{equation}\label{aux:energy1}
\cX_{2N}[u](t)\ls E_{2N}[u](t)+\sum_{|a|\le2N-1}\|\w{t+|x|}\Box Z^au(t,x,y)\|_{L^2(\R^2\times\T)}.
\end{equation}
By \eqref{eqn:high} and the first line of \eqref{null:structure}, we have
\begin{equation}\label{aux:energy2}
\begin{split}
\sum_{|a|\le2N-1}|\w{t+|x|}\Box Z^au|
\ls\sum_{|b|+|c|\le2N-1}(|\p^{\le1}\p Z^bu||ZZ^cu|+|Z\p^{\le1}Z^bu||\p Z^cu|)\\
+\w{t+|x|}\sum_{|b|+|c|+|d|\le2N-1}|\p Z^bu||\p Z^cu||\p Z^du|.
\end{split}
\end{equation}
For the second line in \eqref{aux:energy2}, \eqref{BA:pw:full} implies
\begin{equation}\label{aux:energy2.5}
\sum_{|b|+|c|+|d|\le2N-1}\|\w{t+|x|}|\p Z^bu||\p Z^cu||\p Z^du|\|_{L^2(\R^2\times\T)}
\ls\ve_1E_{2N}[u](t)\ls\ve_1\w{t}^{\ve_2}.
\end{equation}
It suffices  to deal with $|\p^{\le1}\p Z^bu||ZZ^cu|$ in \eqref{aux:energy2}
since the treatment on the remaining term $|Z\p^{\le1}Z^bu||\p Z^cu|$ is similar.
When $|c|\le N$ holds on the right hand side of \eqref{aux:energy2}, \eqref{BA1} and \eqref{BA:pw:full} imply that
\begin{equation}\label{aux:energy3}
\begin{split}
\sum_{\substack{|b|+|c|\le2N-1,\\|c|\le N}}\||\p^{\le1}\p Z^bu||ZZ^cu|\|_{L^2(\R^2\times\T)}
&\ls\ve_1\w{t}^{-1/2}\sum_{|b|\le2N-1}\|\p^{\le1}\p Z^bu\|_{L^2(\R^2\times\T)}\\
&\ls\ve_1\w{t}^{-1/2}E_{2N}[u](t)\ls\ve_1\w{t}^{\ve_2-1/2}.
\end{split}
\end{equation}
When $|c|\ge N+1$, we can see that $|b|\le N-2$ and
\begin{equation}\label{aux:energy4}
|\p^{\le1}\p Z^bu||ZZ^cu|\ls|P_{\neq0}\p^{\le1}\p Z^bu||ZZ^cu|+|P_{=0}\p^{\le1}\p Z^bu||ZZ^cu|.
\end{equation}
Note that for any function $f$,
\begin{equation}\label{aux:energy5}
|Zf|\ls\w{t+|x|}|\p f|.
\end{equation}
Together with \eqref{BA4}, this implies
\begin{equation}\label{aux:energy6}
\begin{split}
\sum_{\substack{|b|+|c|\le2N-1,\\|c|\ge N}}\||P_{\neq0}\p^{\le1}\p Z^bu||ZZ^cu|\|_{L^2(\R^2\times\T)}
&\ls\ve_1\sum_{|c|\le2N-1}\|\p Z^cu\|_{L^2(\R^2\times\T)}\\
&\ls\ve_1\w{t}^{\ve_2}.
\end{split}
\end{equation}
For the last term in \eqref{aux:energy4}, it can be deduced from \eqref{mod:Hardy:ineq}, \eqref{BA3} and \eqref{aux:energy5} that
\begin{equation*}
\begin{split}
&\quad\;\||P_{=0}\p^{\le1}\p Z^bu||ZZ^cu|\|_{L^2(\R^2)}\\
&\ls\|\w{x}^{-1/2}\w{t-|x|}^{-1}ZZ^cu\|_{L^2(\R^2)}\\
&\ls\w{t}^{-1/2}\Big\|\frac{\chi(\frac{|x|}{\w{t}})ZZ^cu}{\w{t-|x|}}\Big\|_{L^2(\R^2)}
+\Big\|\Big(1-\chi(\frac{|x|}{\w{t}}\Big)\w{x}^{-1/2}\w{t-|x|}^{-1}ZZ^cu\Big\|_{L^2(\R^2)}\\
&\ls\ln(2+t)\|\p ZZ^cu\|_{L^2(\R^2)}+\Big\|\Big(1-\chi(\frac{|x|}{\w{t}}\Big)\w{x}^{-1/2}\w{t-|x|}^{-1}\w{t+|x|}\p Z^cu\Big\|_{L^2(\R^2)}\\
&\ls\ln(2+t)\|\p ZZ^cu\|_{L^2(\R^2)}+\|\p Z^cu\|_{L^2(\R^2)},
\end{split}
\end{equation*}
where $\chi$ is defined by \eqref{cutoff:def}.
Thus, we have
\begin{equation}\label{aux:energy7}
\sum_{\substack{|b|+|c|\le2N-1,\\|c|\ge N}}\||P_{=0}\p^{\le1}\p Z^bu||ZZ^cu|\|_{L^2(\R^2\times\T)}
\ls\ve_1\w{t}^{\ve_2}\ln(2+t).
\end{equation}
Collecting \eqref{aux:energy1}-\eqref{aux:energy4} and \eqref{aux:energy6}-\eqref{aux:energy7} with \eqref{BA1} derives \eqref{aux:energy}.
\end{proof}

\begin{lemma}\label{lem:pw0:high}
Let $u(t,x,y)$ be the solution of \eqref{QWE} and suppose that \eqref{BA1}-\eqref{BA4} hold.
Then for any multi-index $a$ with $|a|\le2N-2$, 
\begin{equation}\label{pw0:high}
\begin{split}
&|P_{=0}Z^au(t,x,y)|+\w{x}^{1/2}\w{t-|x|}^{1/2}|P_{=0}\p Z^au(t,x,y)|\ls\ve_1\w{t}^{2\ve_2},\\
&\w{x}^{1/2}\w{t+|x|}^{1/2}|P_{=0}\bar\p Z^au(t,x,y)|\ls\ve_1\w{t}^{2\ve_2}.
\end{split}
\end{equation}
\end{lemma}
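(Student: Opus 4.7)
The plan is to assemble the three estimates in \eqref{pw0:high} from Lemma \ref{lem:Sobolev}, the bootstrap \eqref{BA1}, and the auxiliary energy bound \eqref{aux:energy} just established.

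For the two estimates involving $\p$ and $\bar\p$, I would simply apply Lemma \ref{lem:Sobolev}: since $|a|\le 2N-2$ gives $|a|+2\le 2N$, monotonicity of $E_k[u]$ and $\cX_k[u]$ in $k$ combined with \eqref{BA1} and \eqref{aux:energy} yields
\begin{equation*}
E_{|a|+2}[u](t)+\cX_{|a|+2}[u](t)\ls\ve_1\w{t}^{\ve_2}+\ve_1\w{t}^{2\ve_2}\ls\ve_1\w{t}^{2\ve_2}.
\end{equation*}
Inserting into \eqref{pw0:Sobo} immediately produces both the $\p$-estimate and the $\bar\p$-estimate of \eqref{pw0:high}.

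For the $L^\infty$ bound on $P_{=0}Z^au$ itself (without derivatives), I plan to use the fundamental theorem of calculus in time:
\begin{equation*}
P_{=0}Z^au(t,x,y)=P_{=0}Z^au(0,x,y)+\int_0^t P_{=0}\p_s Z^au(s,x,y)\,ds.
\end{equation*}
The initial datum term is bounded by $C\ve$ via Sobolev embedding on $\R^2\times\T$, applied to the weighted data \eqref{initial:data}, after expressing $Z^au\big|_{t=0}$ in terms of $u_{(0)}, u_{(1)}$ and their derivatives (the $L_i$'s produce factors of $x_i$ that the weight $\w{x}^{4+\cdot}$ absorbs). For the time integral, I would substitute the pointwise bound $|P_{=0}\p Z^au(s,x)|\ls\ve_1\w{x}^{-1/2}\w{s-|x|}^{-1/2}\w{s}^{2\ve_2}$ just established. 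Naive integration in $s$ gives an $O(\w{x}^{-1/2}\w{t}^{1/2})$ factor, which is acceptable in the exterior region $|x|\gtrsim t$ but too large for $|x|\ll t$. In that latter regime I would switch to integrating $\bar\p_r P_{=0}Z^au$ along outgoing null rays $|x|-t=$const, using the sharper $\bar\p$-estimate with $\w{t+|x|}^{-1/2}$ decay; the logarithmic factor produced by the null-coordinate integration is absorbed by a small power of $\w{t}^{\ve_2}$, and the boundary contribution near the origin $|x|\sim 0$ is handled through a local Sobolev embedding on $B_1\times\T$ combined with Poincar\'e on $\T$ to control the mean.

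The main technical obstacle is the sharp $L^\infty$ bound on $P_{=0}Z^au$ itself, since no $L^2$-type bound on $u$ is directly available from the energy (only on $\p u$), and so standard Sobolev embedding does not apply in one shot. Closing the argument requires a region-by-region analysis, trading the slow $\w{s-|x|}^{-1/2}$ decay of the $\p$-estimate for the improved $\w{t+|x|}^{-1/2}$ decay of the $\bar\p$-estimate in the interior region $|x|\ll t$, and carefully verifying that all the resulting boundary and logarithmic contributions are absorbed into the target growth rate $\w{t}^{2\ve_2}$.
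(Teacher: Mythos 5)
Your treatment of the two derivative estimates is fine and is exactly the intended route: for $|a|\le 2N-2$ one has $|a|+2\le 2N$, so \eqref{pw0:Sobo} together with \eqref{BA1} and \eqref{aux:energy} gives $E_{|a|+2}[u](t)+\cX_{|a|+2}[u](t)\ls\ve_1\w{t}^{2\ve_2}$, which yields the bounds on $P_{=0}\p Z^au$ and $P_{=0}\bar\p Z^au$. (The paper itself omits the details, citing the analogous Lemma 3.4 of \cite{HTY24arXiv}.)

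The genuine gap is in your zeroth-order bound on $P_{=0}Z^au$, specifically the boundary term of the interior argument. Integrating $\bar\p_r$ backwards along the outgoing null ray through a point with $|x|\ll t$ terminates on the time axis, so you must control $P_{=0}Z^au(\tau,0)$ for $\tau$ up to $t$. You propose to do this by a local Sobolev embedding on $B_1\times\T$ plus Poincar\'e on $\T$, but this is circular: Poincar\'e only handles the non-zero modes, and a Sobolev embedding for the zero mode on $B_1$ still requires control of $\|P_{=0}Z^au\|_{L^2(B_1)}$ (or of its mean over $B_1$), i.e.\ of the undifferentiated quantity itself, which is exactly what is not available --- the energy \eqref{BA1} and the auxiliary bound \eqref{aux:energy} only control $\p Z^au$, as you yourself note earlier in the proposal. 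Nor can the mean over $B_1$ be recovered by integrating $\p_t$ in time: near the origin the established bound $|P_{=0}\p Z^au|\ls\ve_1\w{x}^{-1/2}\w{s-|x|}^{-1/2}\w{s}^{2\ve_2}$ only gives $\w{s}^{-1/2+2\ve_2}$, whose time integral grows like $\w{t}^{1/2+2\ve_2}$, far beyond the target $\w{t}^{2\ve_2}$. So as written the interior case is not closed.

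A clean repair that avoids the time axis altogether: integrate backwards along the \emph{incoming} null direction $\p_t-\p_r$ from the initial surface. Writing $P_{=0}Z^au(t,x)=P_{=0}Z^au(0,(|x|+t)\omega)-\int_0^t\big[(\p_t-\p_r)P_{=0}Z^au\big](t-s,(|x|+s)\omega)\,ds$ with $\omega=x/|x|$, the already-established derivative bound gives an integrand $\ls\ve_1\w{t}^{2\ve_2}\w{|x|+s}^{-1/2}\w{t-|x|-2s}^{-1/2}$, and $\int_0^t\w{|x|+s}^{-1/2}\w{t-|x|-2s}^{-1/2}\,ds\ls1$ uniformly in $x$; the boundary term is a pure data term of size $O(\ve)$. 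This gives $|P_{=0}Z^au|\ls\ve+\ve_1\w{t}^{2\ve_2}$ in one stroke, with no region splitting and no value on the time axis needed.
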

\begin{proof}
The proof of \eqref{pw0:high} is completely analogous to that of Lemma 3.4 in \cite{HTY24arXiv}, we omit it here.
\end{proof}

\subsection{Lower order pointwise estimates of the zero modes}
Based on the estimates in the last subsection, we are able to improve the pointwise estimates of the zero modes in \eqref{BA2} and \eqref{BA3}.
First, we recall two lemmas from \cite[Lemmas 4.1, 4.2]{Kubo19}.
\begin{lemma}\label{lem:improv1}
Let $w(t,x)$ be a solution of $\Box_{t,x}w=0$ with the initial data $(w,\p_tw)|_{t=0}=(f(x),g(x))$.
Then
\begin{equation*}
\begin{split}
\w{t+|x|}^{1/2}\w{t-|x|}^{1/2}|w(t,x)|&\ls\cA_{5/2,0}[f,g],\\
\w{t+|x|}^{1/2}\w{t-|x|}^{3/2}|\p w(t,x)|&\ls\cA_{7/2,1}[f,g],
\end{split}
\end{equation*}
where $\ds\cA_{\kappa,s}[f,g]:=\sum_{\tilde\Gamma\in\{\p_1,\p_2,\Omega\}}
(\sum_{|a|\le s+1}\|\w{z}^\kappa\tilde\Gamma^af(z)\|_{L^\infty}+\sum_{|a|\le s}\|\w{z}^\kappa\tilde\Gamma^ag(z)\|_{L^\infty})$.
\end{lemma}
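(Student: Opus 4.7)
The plan is to use the explicit Poisson representation for the $2$D homogeneous wave equation,
\begin{equation*}
w(t,x)=\frac{1}{2\pi}\partial_t\Bigl(\int_{|z|<t}\frac{f(x+z)}{\sqrt{t^2-|z|^2}}dz\Bigr)+\frac{1}{2\pi}\int_{|z|<t}\frac{g(x+z)}{\sqrt{t^2-|z|^2}}dz,
\end{equation*}
and to extract the decay by decomposing the disk $\{|z|<t\}$ according to the two characteristic curves $|z|=t\pm|x|$. Passing to polar coordinates $z=r\omega$ with $r\in[0,t]$ and $\omega\in\SS^1$, and using the pointwise bound $|f|+|g|\ls\w{\cdot}^{-5/2}\cA_{5/2,0}[f,g]$, the problem reduces to estimating an Abel-type integral of the form $\int_0^t(t^2-r^2)^{-1/2}\Phi(r,x)\,r\,dr$, where $\Phi(r,x):=\int_{\SS^1}\w{x+r\omega}^{-5/2}d\omega$.

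For the first inequality I would first establish the angular bound $\Phi(r,x)\ls\w{r-|x|}^{-3/2}+\w{r+|x|}^{-3/2}$, obtained by linearizing $|x+r\omega|^2=r^2+|x|^2+2r|x|\cos\theta$ around the two critical angles $\theta=0,\pi$ where $|x+r\omega|$ is extremal. Then I would split the $r$-integral at $r\sim|t-|x||$ and at $r\sim t$ so as to balance the Abel singularity against the angular decay; the worst contribution comes from $r$ simultaneously near $t$ and near $|x|$, and there the bound $(t^2-r^2)^{-1/2}\ls(t+|x|)^{-1/2}(t-r)^{-1/2}$ combines with $\Phi$ to yield exactly $\w{t+|x|}^{-1/2}\w{t-|x|}^{-1/2}$. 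The $\partial_tM_tf$ piece is treated identically after rewriting the $\partial_t$ falling on the kernel as a vanishing boundary term at $r=t$ plus a radial derivative transferred onto $f$ by integration by parts, producing $\omega\cdot\nabla f$, still controlled by $\cA_{5/2,0}$.

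For the derivative inequality I would apply $\partial_\alpha$ to the formula and decompose $\partial_\alpha h(x+z)$ into its radial part $\omega\cdot\nabla h$ and its angular part $r^{-1}\Omega_\omega h$. The angular part is absorbed into the $\Omega$-contribution already present in $\cA_{7/2,1}$; the radial part is integrated by parts once more against $(t^2-r^2)^{-1/2}$, producing an extra Abel factor $(t^2-r^2)^{-1/2}$ plus a boundary term at $r=t$. The strengthened weight $\w{z}^{7/2}$ in $\cA_{7/2,1}$ is calibrated precisely so that both the boundary contribution and the extra Abel factor can be absorbed in the light-cone region, upgrading the $r$-integral bound from $\w{t-|x|}^{-1/2}$ to $\w{t-|x|}^{-3/2}$.

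The main obstacle is the region where $r\to t$ and $|x|\to t$ simultaneously, i.e.\ near the forward light cone, in which the Abel singularity, the slow angular weight, and the curvature of the level circles $\{|x+r\omega|=\textrm{const}\}$ interact delicately. Producing the sharp exponent $\w{t-|x|}^{-1/2}$ there, and then upgrading it to $\w{t-|x|}^{-3/2}$ after differentiation, requires a two-scale decomposition of the $(r,\omega)$-integration tuned to this geometry together with one careful use of $\Omega$ to convert a transverse derivative of the data into an angular one; this is precisely where the extra unit of angular regularity in $\cA_{7/2,1}$ versus $\cA_{5/2,0}$ is consumed.
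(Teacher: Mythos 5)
First, a remark on provenance: the paper does not prove this lemma at all --- it is quoted verbatim from Kubo (Lemmas 4.1 and 4.2 of the cited reference \cite{Kubo19}), so there is no in-paper argument to compare with; your plan of attacking it directly through the 2-D Poisson representation and polar decomposition is in the spirit of the cited source. However, as written your sketch has a genuine quantitative gap at its central step. The angular bound you state, $\Phi(r,x)\ls\w{r-|x|}^{-3/2}+\w{r+|x|}^{-3/2}$, is true but too weak by a whole factor $\w{r+|x|}$, and with it the combination you describe does \emph{not} give the claimed decay. Concretely, take $|x|=t=T$ large: then near $r\approx T$ one has $r(t^2-r^2)^{-1/2}\approx T^{1/2}(t-r)^{-1/2}$, and $\int_0^t (t-r)^{-1/2}\w{r-|x|}^{-3/2}\,dr$ is of size $1$, so your estimate for $\int_0^t(t^2-r^2)^{-1/2}\Phi\, r\,dr$ is of order $T^{1/2}=\w{t+|x|}^{1/2}$, whereas the target is $\w{t+|x|}^{-1/2}\w{t-|x|}^{-1/2}\approx T^{-1/2}$. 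No pointwise-in-$\omega$ bound on $\w{x+r\omega}^{-5/2}$ (even the trivial $\Phi\ls\w{r-|x|}^{-5/2}$) can close this: the essential point is that only an arc of angular measure $O\big(\w{r+|x|}^{-1}\sqrt{\lambda^2-(r-|x|)^2}\big)$ of the circle $\{x+r\omega\}$ lies at distance $\le\lambda$ from the origin. The change of variables $\lambda=|x+r\omega|$, with $d\theta=2\lambda\,d\lambda\big/\sqrt{((r+|x|)^2-\lambda^2)(\lambda^2-(r-|x|)^2)}$, gives the sharp bound $\Phi(r,x)\ls\w{r+|x|}^{-1}\w{r-|x|}^{-3/2}+\w{r+|x|}^{-5/2}$; the factor $\w{r+|x|}^{-1}$ is exactly what cancels the Jacobian factor $r$ and produces $\w{t+|x|}^{-1/2}\w{t-|x|}^{-1/2}$. (Your own linearization of $|x+r\omega|^2$ around $\theta=\pi$ would in fact yield this extra factor $1/\sqrt{r|x|}$ had it been carried through; the stated conclusion simply dropped it, and the subsequent "combine to yield exactly" step inherits the error.)

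Beyond this, the second half of the sketch (the $\p w$ estimate with the upgraded weight $\w{t-|x|}^{-3/2}$) is asserted rather than argued: "integrating by parts once more against $(t^2-r^2)^{-1/2}$" and "the weight $\w{z}^{7/2}$ is calibrated precisely" do not identify where the additional power of $\w{t-|x|}^{-1}$ actually comes from, and this is precisely the delicate part --- one must track the boundary contribution at $r=t$ and the near-cone region $r\approx t\approx|x|$ with the corrected angular estimate in hand, and exchange a transverse derivative of the data for $\Omega f$ and a radial derivative with explicit weights. So the proposal is not a proof as it stands; either supply the sharp angular lemma above and redo the region-by-region bookkeeping with it (for both $w$ and $\p w$), or do as the paper does and cite Kubo's Lemmas 4.1--4.2 directly.
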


\begin{lemma}\label{lem:improv2}
Let $w(t,x)$ be a solution of $\Box_{t,x}w=\cF(t,x)$ with zero initial data $(w,\p_tw)|_{t=0}=(0,0)$.
Then for $\rho\in(0,1/2)$ and $\kappa>0$, 
\begin{equation*}
\begin{split}
\w{t+|x|}^{1/2}\w{t-|x|}^{\rho}|w(t,x)|&\ls\sup_{0\leq s\leq t,~|z|\leq t+|x|}(\w{z}^{1/2}\cW_{1+\rho,1+\kappa}(s,z)|\cF(s,z)|),\\
\w{x}^{1/2}\w{t-|x|}^{1+\rho}|\p w(t,x)|&\ls\sum_{|a|\le1}\sup_{0\leq s\leq t,~|z|\leq t+|x|}(\w{z}^{1/2}\cW_{1+\rho+\kappa,1}(s,z)|\Gamma^a\cF(s,z)|),
\end{split}
\end{equation*}
where $\cW_{\sigma,\lambda}(t,x):=\w{t+|x|}^\sigma(\min\{\w{x},\w{t-|x|}\})^\lambda$.
\end{lemma}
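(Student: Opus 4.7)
The plan is to establish both pointwise bounds via the explicit two-dimensional retarded fundamental solution
$$w(t,x)=\frac{1}{2\pi}\int_0^t\!\!\int_{|y-x|<t-s}\frac{\cF(s,y)}{\sqrt{(t-s)^2-|y-x|^2}}\,dy\,ds,$$
which reduces the task to weighted integrals against the assumed pointwise control of $\cF$.

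For the first inequality, set $M:=\sup_{s,z}\w{z}^{1/2}\cW_{1+\rho,1+\kappa}(s,z)|\cF(s,z)|$, so that
$$|\cF(s,y)|\le M\w{y}^{-1/2}\w{s+|y|}^{-(1+\rho)}(\min\{\w{y},\w{s-|y|}\})^{-(1+\kappa)}.$$
I would introduce the characteristic variables $\sigma=t-s+|y-x|$, $\tau=t-s-|y-x|$ together with polar coordinates centered at $x$; this linearizes the kernel singularity $((t-s)^2-|y-x|^2)^{-1/2}$ and reduces the bound to one-dimensional weighted integrals in $(\sigma,\tau)$ plus an angular average. The next step is to split the $y$-domain into $\{\w{y}\le\w{s-|y|}\}$ (where the combined decay $\w{y}^{-3/2-\kappa}$ is integrable away from the origin) and $\{\w{y}>\w{s-|y|}\}$ (where the factor $\w{s-|y|}^{-1-\kappa}$ concentrates mass near the forward light cone issuing from the source). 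In both regimes the $\sigma$-integral against $\w{s+|y|}^{-(1+\rho)}$ returns the factor $\w{t-|x|}^{-\rho}$, while the $\tau$-integral together with the $\w{y}^{-1/2}$ source decay delivers $\w{t+|x|}^{-1/2}$; the role of $\kappa>0$ is precisely to close the borderline angular/longitudinal integrations.

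For the derivative estimate on $\p w$, direct differentiation of the Duhamel kernel would place a $(t-s)^{-1}$ singularity into the integrand that is not locally integrable, so the plan is to shift this loss onto $\cF$. Since the Klainerman vector fields $\Gamma\in\{\p,L_1,L_2,\Omega\}$ commute (modulo lower-order terms of the same type) with $\Box_{t,x}$, I would integrate by parts in the Duhamel representation along the boundary of the backward characteristic cone from $(t,x)$, trading the $\p$ acting on the fundamental solution for $\Gamma^{\le1}\cF$ in the source. The extra weight $\w{t+|x|}^{\kappa}$ built into $\cW_{1+\rho+\kappa,1}$ (as compared with $\cW_{1+\rho,1+\kappa}$ from the first estimate) is exactly what compensates this one-order loss; then the same $(\sigma,\tau)$ change of variables and two-region decomposition yield $\w{x}^{1/2}\w{t-|x|}^{1+\rho}$ on the left-hand side. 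The main obstacle is the careful bookkeeping of weights through the change of variables and the matching of the $\kappa$-slack to the borderline angular integrations near the forward light cone from the origin; since this computation is executed in full in Lemma 4.2 of \cite{Kubo19}, my plan is to follow that reference directly rather than reproduce the one-dimensional weighted integrals here.
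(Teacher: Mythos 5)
Your proposal is consistent with the paper, which does not prove this lemma at all but simply recalls it from Kubo [Lemmas 4.1, 4.2 of \cite{Kubo19}]; your sketch of the 2-D Duhamel kernel estimate in characteristic coordinates is the standard route behind that reference, and you ultimately defer to the same Lemma 4.2 of \cite{Kubo19} that the paper cites. So the approach is essentially the same as the paper's, and no gap needs to be addressed.
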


\begin{lemma}\label{lem:pw0:improv}
Let $u$ be the solution of \eqref{QWE} and suppose that \eqref{BA1}-\eqref{BA4} hold.
Then we have
\begin{equation}\label{pw0:improv}
\begin{split}
\w{t+|x|}^{0.45}\sum_{|a|\le2N-4}|P_{=0}Z^au(t,x,y)|&\ls\ve_1,\\
\w{t+|x|}^{1/2}\w{t-|x|}^{0.4}\sum_{|a|\le2N-18}|P_{=0}Z^au(t,x,y)|&\ls\ve+\ve_1^2,\\
\w{x}^{1/2}\w{t-|x|}^{1.4}\sum_{|a|\le2N-19}|P_{=0}\p Z^au(t,x,y)|&\ls\ve+\ve_1^2.
\end{split}
\end{equation}
\end{lemma}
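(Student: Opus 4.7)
The plan is to realize each zero mode $P_{=0}Z^au^i$ as the solution of a two-dimensional wave equation and apply the pointwise representation formulas of Lemmas \ref{lem:improv1}-\ref{lem:improv2}. Since $P_{=0}\p_y=0$ and $\Box=\Box_{t,x}-\p_y^2$, applying $P_{=0}$ to \eqref{eqn:high} yields $\Box_{t,x}(P_{=0}Z^au^i)=F^i_a$ with $F^i_a:=P_{=0}(\Box Z^au^i)$. Split $P_{=0}Z^au^i=w^i_F+w^i_I$ into the free evolution carrying the zero-mode initial data and the Duhamel part with zero data. Lemma \ref{lem:improv1} together with \eqref{initial:data} gives immediately $|w^i_F|\ls\ve\w{t+|x|}^{-1/2}\w{t-|x|}^{-1/2}$, which dominates every target in \eqref{pw0:improv}. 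So the substantive task is to bound $w^i_I$ by Lemma \ref{lem:improv2}.

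For the three estimates of \eqref{pw0:improv} we choose: $\rho$ small (e.g.\ $\rho=0.05$, using $\w{t+|x|}^{-1/2}\le\w{t+|x|}^{-0.45}$) in the first; $\rho=0.4$ in the second; and the derivative part of Lemma \ref{lem:improv2} with $\rho=0.4$ in the third, which costs one extra $\Gamma$ on the source (hence the drop from $2N-18$ to $2N-19$). Each case reduces to the pointwise claim $\w{z}^{1/2}\cW_{1+\rho,1+\kappa}(s,z)|F^i_a(s,z)|\ls\ve_1^2$ (with $|\Gamma^{\le 1}F^i_a|$ in the third case), uniformly in $(s,z)$ within the integration region.

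To bound $F^i_a$ pointwise, expand it via \eqref{eqn:high} and decompose each product using \eqref{proj:property} as $P_{=0}(fg)=P_{=0}f\,P_{=0}g+P_{=0}(P_{\neq0}f\,P_{\neq0}g)$. The $P_{\neq0}$-$P_{\neq0}$ pieces are controlled by \eqref{BA4} together with the Sobolev bound \eqref{pw:high}. For the $P_{=0}$-$P_{=0}$ quadratic $Q_{\alpha\beta}$-forms we apply the first (or second) inequality of \eqref{null:structure}, which trades a $\p$-derivative for a $\Gamma$- (or $\bar\p$-)derivative and thus gains $\w{t+|x|}^{-1}$ (or $\w{t+|x|}^{-1}\w{t-|x|}^{-0.4}$ via \eqref{pw0:good:low}); the quasilinear quadratic contribution from \eqref{NL:rewrite} is handled by the third inequality and the cubic null terms by the fourth. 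In every product the factor carrying more vector fields is placed in $L^\infty$ via \eqref{BA2}-\eqref{BA4} when its order is $\le N+3$ (resp.\ $\le N+2$ for the sharper \eqref{BA3}), or otherwise via the high-order bound \eqref{pw0:high}; the remaining low-order factor is handled by the bootstrap or by \eqref{pw0:good:low}. The regularity thresholds $2N-4$ and $2N-18$ in \eqref{pw0:improv} reflect which pointwise bound is affordable on the high-derivative factor: \eqref{pw0:high} (with its mild $\w{t}^{2\ve_2}$ loss) suffices for estimate one, while estimate two forces use of the sharper \eqref{BA3}.

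The principal obstacle is the sharp distribution of $\w{t-|x|}$-decay across all nonlinear contributions: the target weights $\w{t-|x|}^{0.4}$ and $\w{t-|x|}^{1.4}$ in Lemma \ref{lem:improv2} require the slow inputs $\w{t-|x|}^{-0.4}$ (from \eqref{BA2} or \eqref{pw0:good:low}) and $\w{t-|x|}^{-1.4}$ (from \eqref{BA3}) to be assigned to the unique factor per term that is \emph{not} already the target of the null-structure substitution providing the $\w{t+|x|}^{-1}$ gain. This compatibility, together with absorbing the $\w{t}^{2\ve_2}$ loss from \eqref{pw0:high} into the slack factor $\w{t-|x|}^\kappa$ of $\cW_{1+\rho,1+\kappa}$ near the light cone $\w{t-|x|}\sim\w{t+|x|}$, is the central bookkeeping task of the proof.
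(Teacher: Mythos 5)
Your overall strategy --- realizing each zero mode as a solution of a 2-D wave equation, splitting into free and Duhamel parts, and applying Lemmas \ref{lem:improv1}--\ref{lem:improv2} with the stated choices of $\rho$, together with the $P_{=0}/P_{\neq0}$ product decomposition and the null-structure gain of $\w{t+|x|}^{-1}$ --- is exactly the paper's, and your plan for the first estimate is sound. The gap is in the second and third estimates. There you propose to control the high-order factor of each quadratic source term either by \eqref{pw0:high} or by ``the sharper \eqref{BA3}'', but neither is available: \eqref{BA3} only holds up to order $N+2$, whereas the high factor $ZZ^cu$ can carry up to $2N-17>N+2$ vector fields (since $N\ge21$); and \eqref{pw0:high} gives $|P_{=0}ZZ^cu|\ls\ve_1\w{s}^{2\ve_2}$ with \emph{no} $\w{s+|z|}$-decay whatsoever --- the obstruction is not the mild $\w{t}^{2\ve_2}$ loss you plan to absorb into the $\kappa$-slack, but the total absence of decay. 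Concretely, for the pairing $|c|\ge N+1$, $|b|$ low, the best available bound on the source is $\w{s+|z|}^{-1}\cdot\ve_1\w{z}^{-1/2}\w{s-|z|}^{-1.4}\cdot\ve_1\w{s}^{2\ve_2}$, and after multiplying by the weight $\w{z}^{1/2}\cW_{1.4+\ve_2,1}(s,z)$ required by Lemma \ref{lem:improv2} one is left with $\ve_1^2\w{s+|z|}^{0.4+3\ve_2}\w{s-|z|}^{-0.4}$, which blows up along the light cone.

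The paper's resolution, absent from your write-up, is a two-step structure internal to the lemma: one first proves the first line of \eqref{pw0:improv} at order $2N-4$ \emph{together with} the companion derivative estimate $\sum_{|a|\le2N-5}|P_{=0}\p Z^au|\ls\ve_1\w{t+|x|}^{4\ve_2}\w{x}^{-1/2}\w{t-|x|}^{-1}$, and these intermediate high-order bounds --- which, unlike \eqref{pw0:high}, carry genuine $\w{t+|x|}$-decay, respectively an extra half power of $\w{t-|x|}^{-1}$ --- are then substituted for the high-order factor when re-estimating the source for the second and third lines. With $|P_{=0}ZZ^cu|\ls\ve_1\w{s+|z|}^{-0.45}$ the problematic product above becomes $\ve_1^2\w{s+|z|}^{-0.05+3\ve_2}\w{s-|z|}^{-0.4}$, which is bounded. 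You need to add this intermediate derivative estimate to your first step and make the second and third steps depend explicitly on the output of the first; as written, your argument for the last two inequalities of \eqref{pw0:improv} does not close near the light cone.
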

\begin{proof}
At first, we prove the first line of \eqref{pw0:improv}.
Applying Lemmas \ref{lem:improv1}, \ref{lem:improv2} with $\rho=\kappa=\ve_2$ to \eqref{eqn:high} with $|a|\le2N-4$ and $|a|\le2N-5$, 
respectively, one has
\begin{equation}\label{pw0:improv1}
\begin{split}
&\w{t+|x|}^{1/2}\sum_{|a|\le2N-4}|P_{=0}Z^au|+\w{x}^{1/2}\w{t-|x|}\sum_{|a|\le2N-5}|P_{=0}\p Z^au|\\
&\ls\ve+\sup_{0\leq s\leq t,~|z|\leq t+|x|}\sum_{|a|\le2N-4}\w{z}^{1/2}\cW_{1+2\ve_2,1}(s,z)|P_{=0}\Box Z^au(s,z)|,
\end{split}
\end{equation}
where \eqref{initial:data} has been used.
It can be deduced from \eqref{proj:property}, \eqref{eqn:high} and the first inequality of \eqref{null:structure} that
\begin{equation}\label{pw0:improv2}
\begin{split}
&\quad|P_{=0}\Box Z^au(s,z)|\\
&\ls\sum_{|b|+|c|\le|a|}(|P_{=0}Q_{\alpha\beta}(\p Z^bu,Z^cu)|+|P_{=0}Q_{\alpha\beta}(Z^bu,Z^cu)|)+\cC^a\\
&\ls\sum_{|b|+|c|\le|a|}(|Q_{\alpha\beta}(P_{=0}\p Z^bu,P_{=0}Z^cu)|+|Q_{\alpha\beta}(P_{=0}Z^bu,P_{=0}Z^cu)|\\
&\quad+\|Q_{\alpha\beta}(P_{\neq0}\p Z^bu,P_{\neq0}Z^cu)\|_{L_y^\infty}
+\|Q_{\alpha\beta}(P_{\neq0}Z^bu,P_{\neq0}Z^cu)\|_{L_y^\infty})+\cC^a\\
&\ls\w{t+|x|}^{-1}\sum_{|b|+|c|\le|a|}(|P_{=0}\p^{\le1}\p Z^bu||P_{=0}ZZ^cu|+|P_{=0}\p^{\le1}ZZ^bu||P_{=0}\p Z^cu|\\
&\quad+\||P_{\neq0}\p^{\le1}ZZ^bu||P_{\neq0}\p Z^cu|\|_{L_y^\infty}
+\||P_{\neq0}\p^{\le1}\p Z^bu||P_{\neq0}ZZ^cu|\|_{L_y^\infty})+\cC^a,
\end{split}
\end{equation}
where
\begin{equation}\label{pw0:improv2.5}
\cC^a:=\sum_{j,k,l=1}^m\sum_{|b|+|c|+|d|\le |a|}\Big|\sum_{\alpha,\beta,\gamma=0}^3Q_{ijkl,abcd}^{\alpha\beta\gamma}
P_{=0}(\p_{\alpha}Z^bu^j\p_{\beta}Z^cu^k\p_{\gamma}Z^du^l)\Big|.
\end{equation}
When $|c|\le N$ holds in the summation of \eqref{pw0:improv2}, \eqref{BA2} and \eqref{pw0:high} imply that
\begin{equation}\label{pw0:improv3}
\sum_{\substack{|b|+|c|\le|a|,\\|c|\le N}}|P_{=0}\p^{\le1}\p Z^bu||P_{=0}ZZ^cu|
\ls\ve_1^2\w{s+|z|}^{2\ve_2-1/2}\w{z}^{-1/2}\w{s-|z|}^{-0.9}.
\end{equation}
When $|c|\ge N+1$, we can see from $|b|+|c|\le|a|\le2N-4$ that $|b|\le N-5$ holds.
Then it follows from \eqref{BA3} and \eqref{pw0:high} that
\begin{equation}\label{pw0:improv4}
\sum_{\substack{|b|+|c|\le|a|,\\|c|\ge N+1}}|P_{=0}\p^{\le1}\p Z^bu||P_{=0}ZZ^cu|
\ls\ve_1^2\w{s}^{2\ve_2}\w{z}^{-1/2}\w{s-|z|}^{-1.4}.
\end{equation}
The estimate of $\ds\sum_{|b|+|c|\le2N-4}|P_{=0}\p^{\le1}ZZ^bu||P_{=0}\p Z^cu|$ is similar to \eqref{pw0:improv3} and \eqref{pw0:improv4}.

For the last line in \eqref{pw0:improv2}, \eqref{BA1}, \eqref{BA4} and \eqref{pw:high} yield that
\begin{equation}\label{pw0:improv5}
\begin{split}
&\sum_{|b|+|c|\le|a|\le2N-4}(\||P_{\neq0}\p^{\le1}ZZ^bu||P_{\neq0}\p Z^cu|\|_{L_y^\infty}
+\||P_{\neq0}\p^{\le1}\p Z^bu||P_{\neq0}ZZ^cu|\|_{L_y^\infty})\\
&\ls\ve_1^2\w{s+|z|}^{\ve_2-1}\w{z}^{-1/2}.
\end{split}
\end{equation}
Now, we focus on the estimate of the last term $\cC^a$ in \eqref{pw0:improv2}.
In view of the facts that ${\rm Id}=P_{=0}+P_{\neq0}$ and $P_{=0}\p_y=0$, \eqref{null:high} and \eqref{null:structure} imply
\begin{equation*}
\begin{split}
\cC^a&\ls\sum_{|b|+|c|+|d|\le |a|}(|P_{=0}\bar\p Z^bu||P_{=0}\p Z^cu||P_{=0}\p Z^du|\\
&\quad+\||P_{\neq0}\p Z^bu||P_{\neq0}\p Z^cu|\|_{L_y^\infty}|P_{=0}\p Z^du|+\||P_{\neq0}\p Z^bu||P_{\neq0}\p Z^cu||P_{\neq0}\p Z^du|\|_{L_y^\infty}).
\end{split}
\end{equation*}
This, together with \eqref{BA3}, \eqref{BA4}, \eqref{pw:high} and \eqref{pw0:high} implies that
\begin{equation}\label{pw0:improv5.5}
\begin{split}
\cC^a&\ls\ve_1^3\w{s}^{3\ve_2}\w{z}^{-3/2}\w{s+|z|}^{-1/2}\w{s-|z|}^{-1.4}
+\ve_1^3\w{s+|z|}^{\ve_2-1}\w{z}^{-1}\w{s-|z|}^{-1.4}\\
&\quad+\ve_1^3\w{s+|z|}^{\ve_2-2}\w{z}^{-1/2}.
\end{split}
\end{equation}
Substituting \eqref{pw0:improv2}-\eqref{pw0:improv5.5} into \eqref{pw0:improv1} leads to
\begin{equation}\label{pw0:improv6}
\begin{split}
\sum_{|a|\le2N-4}|P_{=0}Z^au|&\ls\ve_1\w{t+|x|}^{-0.45},\\
\sum_{|a|\le2N-5}|P_{=0}\p Z^au|&\ls\ve_1\w{t+|x|}^{4\ve_2}\w{x}^{-1/2}\w{t-|x|}^{-1}.
\end{split}
\end{equation}
Next, we turn to the proofs of the second and third lines in \eqref{pw0:improv}.
Applying Lemmas \ref{lem:improv1}, \ref{lem:improv2} with $\rho=0.4$ and $\kappa=\ve_2$
to \eqref{eqn:high} with $|a|\le2N-18$ and $|a|\le2N-19$, respectively, one arrives at
\begin{equation}\label{pw0:improv7}
\begin{split}
&\w{t+|x|}^{1/2}\w{t-|x|}^{0.4}\sum_{|a|\le2N-18}|P_{=0}Z^au|+\w{x}^{1/2}\w{t-|x|}^{1.4}\sum_{|a|\le2N-19}|P_{=0}\p Z^au|\\
&\ls\ve+\sup_{0\leq s\leq t,~|z|\leq t+|x|}\sum_{|a|\le2N-18}\w{z}^{1/2}\cW_{1.4+\ve_2,1}(s,z)|P_{=0}\Box Z^au(s,z)|.
\end{split}
\end{equation}
With \eqref{BA2}-\eqref{BA4} and \eqref{pw0:improv6}, the estimates \eqref{pw0:improv3}-\eqref{pw0:improv5.5} can be improved to
\begin{equation}\label{pw0:improv8}
\begin{split}
&\sum_{|b|+|c|\le|a|\le2N-18}(|P_{=0}\p^{\le1}\p Z^bu||P_{=0}ZZ^cu|+|P_{=0}\p^{\le1}ZZ^bu||P_{=0}\p Z^cu|)\\
&\ls\ve_1^2\w{s+|z|}^{2\ve_2-0.45}\w{z}^{-1/2}\w{s-|z|}^{-1.4},
\end{split}
\end{equation}
\begin{equation}\label{pw0:improv9}
\begin{split}
&\sum_{|b|+|c|\le|a|\le2N-18}(\||P_{\neq0}\p^{\le1}ZZ^bu||P_{\neq0}\p Z^cu|\|_{L_y^\infty}
+\||P_{\neq0}\p^{\le1}\p Z^bu||P_{\neq0}ZZ^cu|\|_{L_y^\infty})\\
&\ls\ve_1^2\w{s+|z|}^{-2}
\end{split}
\end{equation}
and
\begin{equation}\label{pw0:improv10}
\begin{split}
\sum_{|a|\le2N-18}\cC^a&\ls\ve_1^3\w{s}^{10\ve_2}\w{z}^{-3/2}\w{s+|z|}^{-1/2}\w{s-|z|}^{-2}\\
&+\ve_1^3\w{s+|z|}^{4\ve_2-2}\w{z}^{-1/2}\w{s-|z|}^{-1}+\ve_1^3\w{s+|z|}^{-3}.
\end{split}
\end{equation}
Collecting \eqref{pw0:improv2}, \eqref{pw0:improv7}-\eqref{pw0:improv10} yields the second and third inequalities of \eqref{pw0:improv}.
\end{proof}

\subsection{Pointwise estimates of the non-zero modes}\label{sect4}

This subsection is dedicated to improving the bootstrap assumptions \eqref{BA4}.

\begin{lemma}\label{lem:pwKG}
Let $w(t,x,y)$ be the solution of $~\Box w=\cF(t,x,y)$, it holds that
\begin{equation*}
\begin{split}
\w{t+|x|}^{0.95}|P_{\neq0}w(t,x,y)|&\ls\sup_{\tau\in[0,t]}\sum_{|a|\le5}\w{\tau}^{-1/20}
\|\w{\tau+|x'|}P_{\neq0}Z^a\cF(\tau,x',y)\|_{L^2(\R^2\times\T)}\\
&\qquad+\sum_{|a|\le7}\|\w{x'}^2Z^aw(0,x',y)\|_{L^2(\R^2\times\T)}
\end{split}
\end{equation*}
and
\begin{equation*}
\begin{split}
\w{t+|x|}|P_{\neq0}w(t,x,y)|&\ls\sup_{\tau\in[0,t]}\sum_{|a|\le5}\w{\tau}^{1/10}
\|\w{\tau+|x'|}P_{\neq0}Z^a\cF(\tau,x',y)\|_{L^2(\R^2\times\T)}\\
&\qquad+\sum_{|a|\le7}\|\w{x'}^2Z^aw(0,x',y)\|_{L^2(\R^2\times\T)}.
\end{split}
\end{equation*}
\end{lemma}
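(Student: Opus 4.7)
The plan is to Fourier-decompose in $y$, apply a 2-D Klein-Gordon pointwise decay estimate to each non-zero mode, and sum the resulting bounds via Plancherel on $\T$. For $n\ne 0$, the mode $w_n(t,x)=\frac{1}{2\pi}\int_\T e^{-ny\sqrt{-1}}w(t,x,y)dy$ satisfies the 2-D Klein-Gordon equation $(\Box_{t,x}+n^2)w_n=\cF_n$ with mass $|n|\ge 1$, and the vector fields $\Gamma=\{\p_{t,x},L_1,L_2,\Omega_{12}\}$ commute with $\Box_{t,x}+n^2$. The scaling field $S$ is absent from $\Gamma$ by design, precisely because $[S,\Box_{t,x}+n^2]\ne 0$. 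Applying $\Gamma^a$ we therefore have $(\Box_{t,x}+n^2)\Gamma^aw_n=\Gamma^a\cF_n$ for every $a\in\N_0^6$.

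The core step is a Klainerman--Sobolev-type pointwise decay estimate for the inhomogeneous 2-D Klein-Gordon equation. For $(\Box_{t,x}+m^2)v=G$ on $\R^{1+2}$ with $m\ge 1$, I would establish
\begin{equation*}
\w{t+|x|}|v(t,x)|\ls\frac{1}{m}\Big(\sum_{|a|\le 7}\|\w{x'}^2\Gamma^av(0,x')\|_{L^2_{x'}}+\sup_{\tau\in[0,t]}\w{\tau}^{1/10}\sum_{|a|\le 5}\|\w{\tau+|x'|}\Gamma^aG(\tau,\cdot)\|_{L^2_{x'}}\Big),
\end{equation*}
together with the weaker variant carrying $\w{t+|x|}^{0.95}$ on the left and $\w{\tau}^{-1/20}$ on the right. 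These would be proved by the representation formula for the 2-D Klein-Gordon propagator (or equivalently via a Fourier-multiplier comparison with the 3-D wave equation) combined with Klainerman--Sobolev bounds using only the $\Gamma$-fields. The weaker version trades $0.05$ units of spatial decay for a convergent Duhamel integral of the type $\int_0^t\w{t-s}^{-1+0.05}\w{s}^{-1+1/20}ds$, which is why negative time-weight on the source is permissible there; in contrast, the sharp $\w{t+|x|}^{-1}$ rate needs the $\w{\tau}^{1/10}$ weight to force integrable decay on the Duhamel integrand.

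To pass from per-mode estimates to the full non-zero projection, one-dimensional Sobolev embedding on $\T$ combined with the Poincar\'e inequality \eqref{Poincare:ineq} gives $|P_{\neq0}w(t,x,y)|\ls\|\p_yw(t,x,\cdot)\|_{L^2_y}=\bigl(\sum_{n\ne 0}n^2|w_n(t,x)|^2\bigr)^{1/2}$. The factor $|n|$ is precisely absorbed by the $1/|n|$ gain in the Klein-Gordon estimate above, so that squaring, summing in $n$, and applying Plancherel in $y$ converts $\sum_n\|\Gamma^a\cF_n\|^2_{L^2_{x'}}$ into $\|\Gamma^aP_{\neq0}\cF\|^2_{L^2(\R^2\times\T)}$, and similarly for the data; these $\Gamma^a$-norms are then dominated by the $Z^a$-norms appearing in the lemma since $Z\supset\Gamma$. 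The main obstacle is establishing the uniform-in-$m\ge 1$ 2-D Klein-Gordon pointwise estimate above with the sharp $\w{t+|x|}^{-1}$ decay and the $1/m$ mass gain, without invoking $S$; once that ingredient is in hand, the Fourier reduction and mode summation are mechanical.
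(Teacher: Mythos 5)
The paper does not actually prove this lemma here --- it simply cites Lemma~4.1 of the companion paper \cite{HTY24arXiv} --- so the only question is whether your outline would constitute a proof. It would not, for the reason you yourself flag: the entire analytic content of the statement is the mass-uniform pointwise decay estimate for the 2-D Klein--Gordon equation $(\Box_{t,x}+n^2)w_n=\cF_n$ using only the fields $\{\p_{t,x},L_1,L_2,\Omega_{12}\}$, and you assert it rather than prove it. Calling it ``the main obstacle'' and deferring it to ``a representation formula or a Fourier-multiplier comparison with the 3-D wave equation'' leaves the lemma unproved. Worse, the specific form you posit is doubtful: the claim of an implicit constant uniform in $m\ge1$ \emph{together with} an extra gain of $1/m$ runs against the fact that 2-D Klein--Gordon dispersive constants degrade as $m$ grows (the phase $\sqrt{m^2+|\xi|^2}$ has Hessian of size $\sim1/m$ on $|\xi|\ls m$), so such a gain cannot come for free from unweighted $L^2_{x'}$ norms of $G$; it has to be extracted from $m$-weighted energies or from $\p_y$-derivatives of $\cF$, and your outline does not say which.

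The mode-summation step is also not ``mechanical''. From $|w_n|\ls|n|^{-1}\bigl(D_n+\sup_\tau A_n(\tau)\bigr)$ you need $\bigl(\sum_{n\neq0}\sup_\tau A_n(\tau)^2\bigr)^{1/2}$, but the right-hand side of the lemma only provides $\sup_\tau\bigl(\sum_{n\neq0}A_n(\tau)^2\bigr)^{1/2}$, and the inequality between these two quantities goes the wrong way. The standard repair --- writing $\sup_\tau A_n(\tau)^2\le n^{-2}\sup_\tau\sum_m m^2A_m(\tau)^2$ and summing the $n^{-2}$ --- costs one extra $\p_y$ derivative of $\cF$, which would push your source term to $|a|\le6$ and exceed the $|a|\le5$ stated in the lemma unless the per-mode Klein--Gordon estimate is established with one fewer $\Gamma$-derivative than you wrote. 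So the overall strategy (partial Fourier series in $y$, per-mode Klein--Gordon decay, Plancherel/Sobolev in $y$) is the natural one and almost certainly the one used in \cite{HTY24arXiv}, but as written your argument has a genuine gap at its core and an unresolved bookkeeping obstruction in the summation.
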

\begin{proof}
See Lemma 4.1 in \cite{HTY24arXiv}.
\end{proof}

\begin{lemma}\label{YHC-01}
Let $u(t,x,y)$ be the solution of \eqref{QWE} and suppose that \eqref{BA1}-\eqref{BA4} hold.
Then one has
\begin{equation}\label{pwKG:improv}
\sum_{|a|\le2N-16}|P_{\neq0}Z^au(t,x,y)|\ls(\ve+\ve_1^2)\w{t+|x|}^{-1}.
\end{equation}
\end{lemma}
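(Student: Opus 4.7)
The plan is to apply the second form of Lemma \ref{lem:pwKG} to $w=Z^au^i$ for each multi-index $|a|\le 2N-16$. Since $[Z,\Box]=0$, this yields, for every $i=1,\dots,m$,
\begin{equation*}
\w{t+|x|}|P_{\neq 0}Z^au^i(t,x,y)|\ls\ve+\sup_{\tau\in[0,t]}\sum_{|b|\le 5}\w{\tau}^{1/10}\|\w{\tau+|x'|}P_{\neq 0}\Box Z^{a+b}u^i(\tau,x',y)\|_{L^2(\R^2\times\T)},
\end{equation*}
where the initial-data term is absorbed into $\ve$ via \eqref{initial:data} since only up to $2N-9$ vector fields on $(u_{(0)},u_{(1)})$ appear, bearing weights no larger than $\w{x}^{2N-7}\ls\w{x}^{2N+5}$. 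Setting $k:=2N-11$, Lemma \ref{lem:eqn:high} shows that $\Box Z^{a+b}u^i$ is a sum of $Q_{\alpha\beta}$-type quadratic terms and cubic terms of total $Z$-order $\le k$ still obeying the partial null condition \eqref{null:high}. Thus it suffices to prove $\sup_{\tau\ge 0}\w{\tau}^{1/10}\|\w{\tau+|x|}P_{\neq 0}\cN(\tau,\cdot)\|_{L^2_{x,y}}\ls\ve_1^2$ for each nonlinear summand $\cN$.

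For the quadratic contributions $Q_{\alpha\beta}(\p_\gamma Z^{b_1}u,Z^{b_2}u)$ and $Q_{\alpha\beta}(Z^{b_1}u,Z^{b_2}u)$ with $|b_1|+|b_2|\le k$, I would apply the first line of \eqref{null:structure} to harvest the crucial factor $\w{\tau+|x|}^{-1}$, reducing each term to a bilinear product of the form $|Z^{d_1}u|\cdot|\p Z^{d_2}u|$ (or symmetric variants) with $d_1+d_2\le k+2\le 2N-9$. Decomposing each factor as $P_{=0}+P_{\neq 0}$ and using that the pure zero-mode product has no $y$-dependence and is killed by $P_{\neq 0}$, only three mixed cases remain. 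Since $\min(d_1,d_2)\le\lfloor(k+2)/2\rfloor\le N-5$, I always place the lower-order factor in $L^\infty_{x,y}$ via \eqref{BA:pw:full}, \eqref{BA3}, or \eqref{BA4} (whichever matches its zero/non-zero mode character), and the higher-order factor in $L^2_{x,y}$ via the energy bootstrap \eqref{BA1}. For a top-order non-zero-mode factor, the Poincar\'e inequality \eqref{Poincare:ineq} combined with $[Z,\p_y]=0$ yields $\|P_{\neq 0}Z^du\|_{L^2_{x,y}}\ls\|\p_y Z^du\|_{L^2_{x,y}}\le E_{|d|}[u](\tau)\ls\ve_1\w{\tau}^{\ve_2}$. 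Each product is then bounded by $\ve_1^2\w{\tau}^{\ve_2-1/2}$ (or better), and the total factor $\w{\tau}^{\ve_2-2/5}$ after multiplication by $\w{\tau}^{1/10}$ is uniformly bounded since $\ve_2=10^{-3}$.

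For the cubic terms $Q^{\alpha\beta\gamma}_{\cdot\cdot\cdot\cdot}\p_\alpha Z^{b_1}u\,\p_\beta Z^{b_2}u\,\p_\gamma Z^{b_3}u$ with $|b_1|+|b_2|+|b_3|\le k$, I would split according to whether all indices $\alpha,\beta,\gamma$ lie in $\{0,1,2\}$. In that case, the fourth line of \eqref{null:structure} combined with \eqref{cubic:null:condtion} puts a $\bar\p$ on at least one factor, exploiting the strong decay \eqref{pw0:good:low} for zero modes or the analogous decay for non-zero modes. If some index equals $3$, then the corresponding factor is $\p_y Z^{b_i}u=P_{\neq 0}\p_y Z^{b_i}u$, which by \eqref{BA4} decays like $\w{\tau+|x|}^{-1}$. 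In every case, two of the three factors are placed in $L^\infty_{x,y}$ (always possible since at least two of $|b_1|,|b_2|,|b_3|$ are $\le\lfloor k/3\rfloor$) and the remaining one in $L^2_{x,y}$ via \eqref{BA1}, producing $\ve_1^3$ times a strictly negative power of $\w{\tau}$, well within the required bound.

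The hard part is the bookkeeping of indices and modes: one must verify that in every factorization the $L^\infty$ factor's $Z$-order satisfies $|d|\le N+3$ (for \eqref{BA:pw:full}), $|d|\le N+2$ (for \eqref{BA3}), or $|d|\le 2N-16$ (for \eqref{BA4}), while simultaneously the $L^2$ factor falls under the energy estimate \eqref{BA1}. Since $k=2N-11$ and $N\ge 21$, all required inequalities hold with room to spare, and the case analysis closely parallels the analogous lemma of \cite{HTY24arXiv}. The decisive new ingredient relative to the $Q_0$ case is the $\w{\tau+|x|}^{-1}$ gain from the $Q_{\alpha\beta}$ null structure in \eqref{null:structure}, which exactly cancels the $\w{\tau+|x|}$ weight required by Lemma \ref{lem:pwKG} and makes the whole scheme close.
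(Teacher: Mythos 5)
Your overall frame (apply Lemma \ref{lem:pwKG}, use the first line of \eqref{null:structure} to harvest $\w{\tau+|x|}^{-1}$ on the quadratic terms, decompose into modes, use Poincar\'e for top-order non-zero modes) matches the paper, but you apply only the \emph{second} estimate of Lemma \ref{lem:pwKG} in a single pass, and this is where the argument breaks on the cubic terms. Consider a cubic summand $\w{\tau+|x|}|P_{\neq0}\p Z^{b}u||P_{=0}\p Z^{c}u||P_{=0}\p Z^{d}u|$ in which the non-zero-mode factor carries the large index, say $2N-16<|b|+1\le 2N-10$, so that \eqref{BA4} is \emph{not} available for it and it must be placed in $L^2$ via \eqref{BA1}. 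The two remaining low-order factors are zero modes and the only pointwise bound you have for them is \eqref{BA3}, which gives $\w{x}^{-1/2}\w{\tau-|x|}^{-1.4}$ each; but $\w{\tau+|x|}\,\w{x}^{-1}\w{\tau-|x|}^{-2.8}$ is merely $O(1)$ near the light cone $|x|\approx\tau$, so the best you get is $\ve_1^2E_{2N}[u](\tau)\ls\ve_1^3\w{\tau}^{\ve_2}$ — exactly the paper's \eqref{pwKG:improv6}. Multiplied by the weight $\w{\tau}^{1/10}$ demanded by the second estimate of Lemma \ref{lem:pwKG}, this diverges, so your claim that every cubic contribution yields ``$\ve_1^3$ times a strictly negative power of $\w{\tau}$'' is false. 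The cubic null condition does not rescue you: the summands containing a $\p_y$ fall outside \eqref{null:high}, and even for the pure $(t,x)$ summands the null structure only places $\bar\p$ on \emph{some} factor in each term of \eqref{null:structure}; the term where $\bar\p$ lands on the high-order $L^2$ factor gains nothing. The paper's remedy is a genuine two-step bootstrap: it first applies the \emph{first} estimate of Lemma \ref{lem:pwKG} (whose weight $\w{\tau}^{-1/20}$ tolerates the $\w{\tau}^{\ve_2}$ cubic bound) to obtain $|P_{\neq0}Z^au|\ls(\ve+\ve_1^2)\w{t+|x|}^{-0.95}$ for all $|a|\le 2N-10$, and only then uses this pointwise decay on the high-order non-zero-mode factor to upgrade the cubic bound to $\ve_1^3\w{\tau}^{-0.4}$, after which the second estimate closes. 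You need this intermediate step.

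A secondary gap: your rule ``lower-order factor in $L^\infty$, higher-order factor in $L^2$ via \eqref{BA1}'' fails for the quadratic summands in which the undifferentiated factor produced by the null-form estimate is a \emph{zero mode} of high order, i.e.\ $P_{=0}ZZ^{b}u$ with $|b|$ large: this is controlled neither by $E_{2N}[u]$ (no derivative in front) nor by Poincar\'e (zero mode), and converting via $|Zf|\ls\w{\tau+|x|}|\p f|$ against the $\w{\tau+|x|}^{-1}$ from \eqref{BA4} on the other factor again leaves only $\ve_1^2\w{\tau}^{\ve_2}$. The paper avoids this by always putting the \emph{zero-mode} factor in $L^\infty$ — regardless of its order — using the high-order pointwise estimates \eqref{pw0:high} and the first line of \eqref{pw0:improv}, which are valid up to order about $2N-4$ and supply the decay $\w{t+|x|}^{-0.45}$, and putting the non-zero-mode factor in $L^2$ via Poincar\'e plus the energy.
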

\begin{proof}
At first, we prove
\begin{equation}\label{pwKG:improv0.5}
\sum_{|a|\le2N-10}|P_{\neq0}Z^au(t,x,y)|\ls(\ve+\ve_1^2)\w{t+|x|}^{-0.95}.
\end{equation}
By the first estimate of Lemma \ref{lem:pwKG} and \eqref{initial:data}, we have
\begin{equation}\label{pwKG:improv1}
\begin{split}
&\w{t+|x|}^{0.95}\sum_{|a|\le2N-10}|P_{\neq0}Z^au(t,x,y)|\\
&\ls\ve+\sup_{s\in[0,t]}\sum_{|a|\le2N-5}\w{s}^{-1/20}
\|\w{s+|z|}P_{\neq0}\Box Z^au(s,z,y)\|_{L^2(\R^2\times\T)}.
\end{split}
\end{equation}
For the last term in \eqref{pwKG:improv1}, it follows from \eqref{proj:property}, \eqref{eqn:high} and the first inequality in \eqref{null:structure} that
\begin{equation}\label{pwKG:improv2}
\begin{split}
&\sum_{|a|\le2N-5}\|\w{t+|x|}P_{\neq0}\Box Z^au(t,x,y)\|_{L^2_{x,y}}\\
&\ls\sum_{|b|+|c|+|d|\le2N-5}\cC_{bcd}+\sum_{|b|+|c|\le2N-5}\Big(\|\w{t+|x|}Q_{\alpha\beta}(P_{\neq0}\p^{\le1} Z^bu,P_{=0}Z^cu)\|_{L^2_{x,y}}\\
&+\|\w{t+|x|}Q_{\alpha\beta}(P_{=0}\p^{\le1} Z^bu,P_{\neq0}Z^cu)\|_{L^2_{x,y}}
+\|\w{t+|x|}Q_{\alpha\beta}(P_{\neq0}\p^{\le1} Z^bu,P_{\neq0}Z^cu)\|_{L^2_{x,y}}\Big)\\
&\ls\sum_{|b|+|c|\le2N-5}\Big(\||P_{\neq0}\p\p^{\le1}Z^bu||P_{=0}ZZ^cu|\|_{L^2_{x,y}}
+\||P_{\neq0}Z\p^{\le1} Z^bu||P_{=0}\p Z^cu|\|_{L^2_{x,y}}\\
&\quad+\||P_{=0}\p\p^{\le1}Z^bu||P_{\neq0}ZZ^cu|\|_{L^2_{x,y}}
+\||P_{=0}Z\p^{\le1} Z^bu||P_{\neq0}\p Z^cu|\|_{L^2_{x,y}}\\
&\quad+\||P_{\neq0}\p\p^{\le1}Z^bu||P_{\neq0}ZZ^cu|\|_{L^2_{x,y}}
+\||P_{\neq0}Z\p^{\le1} Z^bu||P_{\neq0}\p Z^cu|\|_{L^2_{x,y}}\Big)\\
&\quad+\sum_{|b|+|c|+|d|\le2N-5}\cC_{bcd},
\end{split}
\end{equation}
where
\begin{equation}\label{pwKG:improv2.5}
\begin{split}
\cC_{bcd}&:=\|\w{t+|x|}|P_{\neq0}\p Z^bu||P_{=0}\p Z^cu||P_{=0}\p Z^du|\|_{L^2_{x,y}}\\
&\quad+\|\w{t+|x|}|P_{\neq0}\p Z^bu||P_{\neq0}\p Z^cu||P_{=0}\p Z^du|\|_{L^2_{x,y}}\\
&\quad+\|\w{t+|x|}|P_{\neq0}\p Z^bu||P_{\neq0}\p Z^cu||P_{\neq0}\p Z^du|\|_{L^2_{x,y}}.
\end{split}
\end{equation}
By \eqref{Poincare:ineq}, \eqref{BA1}, \eqref{pw0:high} and the first inequality of \eqref{pw0:improv},
one has
\begin{equation}\label{pwKG:improv3}
\begin{split}
&\sum_{|b|+|c|\le2N-5}(\||P_{\neq0}\p\p^{\le1}Z^bu||P_{=0}ZZ^cu|\|_{L^2_{x,y}}
+\||P_{\neq0}Z\p^{\le1} Z^bu||P_{=0}\p Z^cu|\|_{L^2_{x,y}})\\
&\ls\ve_1\w{t}^{-0.45}E_{2N}[u](t)
\ls\ve_1^2\w{t}^{-0.4}.
\end{split}
\end{equation}
Analogously, we can obtain
\begin{equation}\label{pwKG:improv4}
\begin{split}
&\sum_{|b|+|c|\le2N-5}(\||P_{=0}\p\p^{\le1}Z^bu||P_{\neq0}ZZ^cu|\|_{L^2_{x,y}}
+\||P_{=0}Z\p^{\le1} Z^bu||P_{\neq0}\p Z^cu|\|_{L^2_{x,y}})\\
&\ls\ve_1^2\w{t}^{-0.4}.
\end{split}
\end{equation}
Note that $N\ge21$ ensures that $|b|\le2N-18$ or $|c|\le2N-18$. Thus, \eqref{Poincare:ineq} and \eqref{BA4} lead to
\begin{equation}\label{pwKG:improv5}
\begin{split}
&\sum_{|b|+|c|\le2N-5}(\||P_{\neq0}\p\p^{\le1}Z^bu||P_{\neq0}ZZ^cu|\|_{L^2_{x,y}}
+\||P_{\neq0}Z\p^{\le1} Z^bu||P_{\neq0}\p Z^cu|\|_{L^2_{x,y}})\\
&\ls\ve_1\w{t}^{-1}E_{2N}[u](t)
\ls\ve_1^2\w{t}^{-0.9}.
\end{split}
\end{equation}
Next, we turn to the estimate of $\cC_{bcd}$ given by \eqref{pwKG:improv2.5}. It follows from \eqref{BA3} and \eqref{BA4} that
\begin{equation}\label{pwKG:improv6}
\sum_{|b|+|c|+|d|\le2N-5}\cC_{bcd}\ls\ve_1^2E_{2N}(t)\ls\ve_1^3\w{t}^{\ve_2}.
\end{equation}
Collecting \eqref{pwKG:improv1}-\eqref{pwKG:improv6} yields \eqref{pwKG:improv0.5}.
Applying the second estimate of Lemma \ref{lem:pwKG} with \eqref{initial:data} leads to
\begin{equation}\label{pwKG:improv7}
\begin{split}
&\w{t+|x|}\sum_{|a|\le2N-16}|P_{\neq0}Z^au(t,x,y)|\\
&\ls\ve+\sup_{s\in[0,t]}\sum_{|a|\le2N-11}\w{s}^{1/10}
\|\w{s+|z|}P_{\neq0}\Box Z^au(s,z,y)\|_{L^2(\R^2\times\T)}.
\end{split}
\end{equation}
With \eqref{pwKG:improv0.5}, we can improve the estimate \eqref{pwKG:improv6} to
\begin{equation}\label{pwKG:improv8}
\sum_{|b|+|c|+|d|\le2N-11}\cC_{bcd}\ls\ve_1^3\w{t}^{-0.4}.
\end{equation}
Substituting \eqref{pwKG:improv2}-\eqref{pwKG:improv5} and \eqref{pwKG:improv8} into \eqref{pwKG:improv7} yields \eqref{pwKG:improv}.
\end{proof}

\section{Energy estimates}\label{sect5}

\begin{lemma}\label{lem:energy}
Let $u$ be the solution of \eqref{QWE} and suppose that \eqref{BA1}-\eqref{BA4} hold.
Then
\begin{equation}\label{energy}
E^2_{2N}[u](t')\ls\ve^2+\ve_1\int_0^{t'}\w{t}^{-1}E^2_{2N}[u](t)dt.
\end{equation}
\end{lemma}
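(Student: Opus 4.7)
The plan is to apply the standard multiplier method combined with Alinhac's ghost weight technique. Concretely, I would pick a smooth bounded increasing function $q(s)$ with $q'(s) \gtrsim \w{s}^{-1-\ve_2}$. I would apply $Z^a$ for each $|a| \le 2N$ using Lemma \ref{lem:eqn:high}, multiply \eqref{eqn:high} by $e^{-q(|x|-t)} \p_t Z^a u^i$, and integrate by parts over $\R^2 \times \T$ to obtain the identity
\begin{equation*}
\frac{1}{2} \frac{d}{dt} \int e^{-q}|\p Z^a u^i|^2 \, dx\, dy + \frac{1}{2} \int q' e^{-q}\bigl(|\bar\p Z^a u^i|^2 + |\p_y Z^a u^i|^2\bigr) dx\, dy = \int e^{-q} \p_t Z^a u^i \cdot \Box Z^a u^i \, dx\, dy.
\end{equation*}
The second term on the left is the crucial ghost-weight dissipation, providing $\bar\p$- and $\p_y$-control to absorb the "good derivative" factors extracted from the nonlinearity via the null conditions.

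Next I would split the right-hand side of \eqref{eqn:high} into three pieces: (i) the top-order quasilinear piece $Q_{ijk}^{\alpha\beta\gamma} \p^2_{\alpha\beta} Z^a u^j \p_\gamma u^k$ corresponding to $(b,c)=(a,0)$ in the sum over $Q_{\alpha\beta}(\p_\gamma u^j, u^k)$-type terms (rewritten via \eqref{NL:rewrite}); (ii) the remaining semilinear quadratic $Q_{\alpha\beta}$ terms, in which both factors carry between $1$ and $2N-1$ vector fields; and (iii) the cubic terms. For (i), the symmetry \eqref{sym:condition} permits integration by parts in $\p_\alpha$ so that, modulo a modified-energy correction of size $O(\ve_1) E^2_{2N}[u](t)$ (absorbed once $\ve_1$ is small, since it makes the modified energy equivalent to $E^2_{2N}$), the highest-order terms combine into a perfect time derivative, leaving error terms whose coefficients $\p\p u$ or $\p u$ are bounded pointwise by \eqref{pw0:improv} for the zero mode and by \eqref{pwKG:improv} for the non-zero mode after splitting through \eqref{proj:property}. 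For (ii), the null-form bound $|Q_{\alpha\beta}(f,g)| \ls |\bar\p f||\p g| + |\p f||\bar\p g|$ from Lemma \ref{lem:null} places a $\bar\p$ on one factor; since $|b|+|c| \le 2N$, at least one of $|b|, |c|$ is $\le N+2$, so that factor is controlled pointwise by \eqref{BA3}, \eqref{pw0:good:low} (zero mode) or \eqref{pwKG:improv} (non-zero mode), while the $\bar\p$ on the high-order factor is paired with the ghost dissipation through Cauchy--Schwarz. For (iii), \eqref{null:high} and Lemma \ref{lem:null} again put a $\bar\p$ on one factor and bound the remaining two pointwise via \eqref{BA3}, \eqref{BA4} or \eqref{pw0:good:low}, producing an extra $\ve_1$-smallness compared to (ii).

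Assembling all estimates, the right-hand side of the energy identity is dominated by $C\ve_1 \w{t}^{-1} E^2_{2N}[u](t)$ plus a term absorbable into the ghost dissipation on the left. Summing over $|a| \le 2N$, integrating in $t$ over $[0,t']$, and using \eqref{initial:data} to bound the initial contribution by $C\ve^2$, then yields \eqref{energy}. The main obstacle will be the careful bookkeeping for the top-order quasilinear piece (i): one must separately handle the $P_{=0}$ and $P_{\neq 0}$ projections of the coefficient $\p_\gamma u^k$ so as to apply the correct pointwise estimate to each, preserve the symmetric structure throughout the integration by parts so no derivative loss occurs, and match every $\bar\p$-factor produced by the null structure to the ghost dissipation without losing the $\w{t-|x|}^{-1/2}$-weight at top order. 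Crucially, the non-commuting scaling vector $S$ is avoided throughout because only $Z$-fields appear in Lemma \ref{lem:null}, which is exactly what allows this energy identity to be used uniformly for all Fourier modes of \eqref{reduction to WKG}.
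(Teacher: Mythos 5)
Your proposal is correct and follows essentially the same route as the paper: the Alinhac ghost weight multiplier $e^{q(|x|-t)}\p_tZ^au^i$ with $q'\approx\w{s}^{-1.1}$, the symmetric integration by parts on the top-order quasilinear piece $Q_{ijk}^{\alpha\beta\gamma}\p^2_{\alpha\beta}Z^au^j\p_\gamma u^k$ yielding a modified energy plus errors controlled via the null condition and the $P_{=0}/P_{\neq0}$ splitting, the low/high split of the semilinear $Q_{\alpha\beta}$ terms with the good $\bar\p$-factor absorbed into the ghost dissipation, and the direct pointwise treatment of the cubic terms. The only differences are cosmetic (sign convention on $q$, and the split at $|b|\le N$ versus your $|b|\le N+2$).
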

\begin{proof}
Firstly, by the virtue of \eqref{NL:rewrite}, \eqref{eqn:high} can be rewritten as
\begin{equation}\label{energy1}
\begin{split}
&\Box Z^au^i=\sum_{j,k=1}^m\sum_{\alpha,\beta,\gamma=0}^3Q^{\alpha\beta\gamma}_{ijk}\p^2_{\alpha\beta}Z^au^j\p_{\gamma}u^k\\
&\quad+\sum_{j,k=1}^m\sum_{\alpha,\beta=0}^2\Big\{\sum_{\substack{b+c\le a,\\|b|<|a|}}\sum_{\gamma=0}^3
C^{\alpha\beta\gamma}_{ijk,abc}Q_{\alpha\beta}(\p_{\gamma}Z^bu^j,Z^cu^k)+\sum_{b+c\le a}C^{\alpha\beta}_{ijk,abc}Q_{\alpha\beta}(Z^bu^j,Z^cu^k)\Big\}
\\&\quad+\sum_{j,k,l=1}^m\sum_{b+c+d\le a}\sum_{\alpha,\beta,\gamma=0}^3Q_{ijkl,abcd}^{\alpha\beta\gamma}\p_{\alpha}Z^bu^j\p_{\beta}Z^cu^k\p_{\gamma}Z^du^l.
\end{split}
\end{equation}
For any multi-index $a$ with $|a|\le2N$, multiplying \eqref{energy1} by $e^q\p_tZ^au^i$ with $q=q(|x|-t)$ and $q(s)=\int_{-\infty}^s\frac{dt}{\w{t}^{1.1}}$ yields that
\begin{equation}\label{energy2}
\begin{split}
&\frac12\sum_{i=1}^m\p_t(e^q|\p Z^au^i|^2)-\sum_{i=1}^m\sum_{\gamma=1}^3\p_{\gamma}(e^q\p_tZ^au^i\p_{\gamma}Z^au^i)
+\sum_{i=1}^m\frac{e^q}{2\w{t-|x}^{1.1}}(|\bar\p Z^au^i|^2+|\p_yZ^au^i|^2)\\
&\quad=\sum_{\alpha,\beta,\gamma=0}^3e^qQ_{ijk}^{\alpha\beta\gamma}\p_tZ^au^i\p^2_{\alpha\beta}Z^au^j\p_{\gamma}u^k\\
&\quad+\sum_{\alpha,\beta=0}^2e^q\p_tZ^au^i\Big\{\sum_{\substack{b+c\le a,\\|b|<|a|}}\sum_{\gamma=0}^3
C^{\alpha\beta\gamma}_{ijk,abc}Q_{\alpha\beta}(\p_{\gamma}Z^bu^j,Z^cu^k)
+\sum_{b+c\le a}C^{\alpha\beta}_{ijk,abc}Q_{\alpha\beta}(Z^bu^j,Z^cu^k)\Big\}\\
&\quad+\sum_{j,k,l=1}^m\sum_{b+c+d\le a}\sum_{\alpha,\beta,\gamma=0}^3
Q_{ijkl,abcd}^{\alpha\beta\gamma}\p_{\alpha}Z^bu^j\p_{\beta}Z^cu^k\p_{\gamma}Z^du^l,
\end{split}
\end{equation}
where the summations over $i,j,k=1,\cdots,m$ on the right hand side of \eqref{energy2} are omitted.
By the symmetric conditions \eqref{sym:condition}, for the terms on the second line of \eqref{energy2}, we have
\begin{equation}\label{energy3}
\begin{split}
&\qquad e^qQ_{ijk}^{\alpha\beta\gamma}\p_tZ^au^i\p^2_{\alpha\beta}Z^au^j\p_{\gamma}u^k\\
&=\p_\alpha(e^qQ_{ijk}^{\alpha\beta\gamma}\p_tZ^au^i\p_{\beta}Z^au^j\p_{\gamma}u^k)
-e^qQ_{ijk}^{\alpha\beta\gamma}\p_{\alpha}q\p_tZ^au^i\p_{\beta}Z^au^j\p_{\gamma}u^k\\
&\quad-e^qQ_{ijk}^{\alpha\beta\gamma}\p_tZ^au^i\p_{\beta}Z^au^j\p^2_{\alpha\gamma}u^k
-e^qQ_{ijk}^{\alpha\beta\gamma}\p_t\p_{\alpha}Z^au^i\p_{\beta}Z^au^j\p_{\gamma}u^k\\
&=\p_\alpha(e^qQ_{ijk}^{\alpha\beta\gamma}\p_tZ^au^i\p_{\beta}Z^au^j\p_{\gamma}u^k)
-e^qQ_{ijk}^{\alpha\beta\gamma}\p_{\alpha}q\p_tZ^au^i\p_{\beta}Z^au^j\p_{\gamma}u^k\\
&\quad-e^qQ_{ijk}^{\alpha\beta\gamma}\p_tZ^au^i\p_{\beta}Z^au^j\p^2_{\alpha\gamma}u^k
-\frac{1}{2}\p_t(e^qQ_{ijk}^{\alpha\beta\gamma}\p_{\alpha}Z^au^i\p_{\beta}Z^au^j\p_{\gamma}u^k)\\
&\quad+\frac{1}{2}e^qQ_{ijk}^{\alpha\beta\gamma}\p_tq\p_{\alpha}Z^au^i\p_{\beta}Z^au^j\p_{\gamma}u^k
+\frac{1}{2}e^qQ_{ijk}^{\alpha\beta\gamma}\p_{\alpha}Z^au^i\p_{\beta}Z^au^j\p^2_{0\gamma}u^k,
\end{split}
\end{equation}
where the summation over $\alpha,\beta,\gamma=0,1,2,3$ is omitted.

By integrating \eqref{energy2}-\eqref{energy3} over $[0,t']\times\R^2\times\T$, summing over all $|a|\le2N$
and applying \eqref{BA:pw:full}, we arrive at
\begin{equation}\label{energy4}
\begin{split}
&\quad E^2_{2N}[u](t')+\sum_{|a|\le2N}\int_0^{t'}\iint_{\R^2\times\T}\frac{|\bar\p Z^au|^2+|\p_yZ^au|^2}{\w{t-|x|}^{1.1}}dxdydt\\
&\ls E^2_{2N}[u](0)+\ve_1 E^2_{2N}[u](t')+\sum_{|a|\le2N}\int_0^{t'}\iint_{\R^2\times\T}I(t,x,y)dxdydt\\
&\quad+\sum_{\substack{|b|+|c|\le|a|\le2N,\\|b|\le2N-1}}\sum_{\alpha,\beta=0}^2\sum_{\gamma=0}^3\int_0^{t'}\iint_{\R^2\times\T}
|\p_tZ^au^iQ_{\alpha\beta}(\p_{\gamma}Z^bu^j,Z^cu^k)|dxdydt\\
&\quad+\sum_{|b|+|c|\le|a|\le2N}\sum_{\alpha,\beta=0}^2\int_0^{t'}\iint_{\R^2\times\T}|\p_tZ^au^iQ_{\alpha\beta}(Z^bu^j,Z^cu^k)|dxdydt\\
&\quad+\sum_{|b|+|c|+|d|\le|a|\le2N}\sum_{\alpha,\beta,\gamma=0}^3\int_0^{t'}\iint_{\R^2\times\T}|\p_tZ^au^i
\p_{\alpha}Z^bu^j\p_{\beta}Z^cu^k\p_{\gamma}Z^du^l|dxdydt,
\end{split}
\end{equation}
where
\begin{equation}\label{energy5}
\begin{split}
&I(t,x,y):=
|\p_tZ^au^i|\Big(\Big|\sum_{\alpha,\beta,\gamma=0}^3Q_{ijk}^{\alpha\beta\gamma}\p_{\alpha}q\p_{\beta}Z^au^j\p_{\gamma}u^k\Big|
+\Big|\sum_{\alpha,\beta,\gamma=0}^3Q_{ijk}^{\alpha\beta\gamma}\p_{\beta}Z^au^j\p^2_{\alpha\gamma}u^k\Big|\Big)\\
&\quad +|\p_tq|\Big|\sum_{\alpha,\beta,\gamma=0}^3Q_{ijk}^{\alpha\beta\mu}\p_{\alpha}Z^au^i\p_{\beta}Z^au^j\p_{\gamma}u^k\Big|
+\Big|\sum_{\alpha,\beta,\gamma=0}^3Q_{ijk}^{\alpha\beta\gamma}\p_{\alpha}Z^au^i\p_{\beta}Z^au^j\p^2_{0\gamma}u^k\Big|.
\end{split}
\end{equation}
For the first summation in $I(t,x,y)$, by the facts of $1=P_{\neq0}+P_{=0}$, $\p_yP_{=0}=0$ and $\p_yq=0$, we can see that
\begin{equation}\label{energy6}
\begin{split}
&\sum_{\alpha,\beta,\gamma=0}^3Q_{ijk}^{\alpha\beta\gamma}\p_{\alpha}q\p_{\beta}Z^au^j\p_{\gamma}u^k
=\sum_{\alpha,\beta,\gamma=0}^3Q_{ijk}^{\alpha\beta\gamma}\p_{\alpha}q\p_{\beta}Z^au^jP_{\neq0}\p_{\gamma}u^k\\
&\quad+\sum_{\alpha,\gamma=0}^2Q_{ijk}^{\alpha3\gamma}\p_{\alpha}q\p_yZ^au^jP_{=0}\p_{\gamma}u^k
+\sum_{\alpha,\beta,\gamma=0}^2Q_{ijk}^{\alpha\beta\gamma}\p_{\alpha}q\p_{\beta}Z^au^jP_{=0}\p_{\gamma}u^k.
\end{split}
\end{equation}
The other terms in $I(t,x,y)$ can be analogously treated.
For the last term in \eqref{energy6}, it can be deduced from \eqref{NC:rewrite}, the third and fourth lines of \eqref{null:structure}
with the fact of $\bar\p q=0$ that
\begin{equation}\label{energy7}
\begin{split}
I(t,x,y)\ls|\p Z^au|^2(|P_{\neq0}\p^{\le1}\p u|+|P_{=0}\bar\p \p^{\le1}u|)
+|\p Z^au|(|\bar\p Z^au|+|\p_yZ^au|)|P_{=0}\p\p^{\le1} u|.
\end{split}
\end{equation}
Then from \eqref{BA3}, \eqref{BA4}, \eqref{pw0:good:low} and Young's inequality, one has
\begin{equation}\label{energy8}
\begin{split}
&\sum_{|a|\le2N}\int_0^{t'}\iint_{\R^2\times\T}I(t,x,y)dxdydt\\
&\ls\ve_1\sum_{|a|\le2N}\int_0^{t'}\iint_{\R^2\times\T}\frac{|\bar\p Z^au|^2+|\p_yZ^au|^2}{\w{t-|x|}^{1.1}}dxdydt
+\ve_1\int_0^{t'}\w{t}^{-1}E^2_{2N}[u](t)dt.
\end{split}
\end{equation}
Next, we turn to the estimates of the third and fourth lines in \eqref{energy4}.

We now treat the case of $|b|\le N$.
It follows from the second line of \eqref{null:structure} that
\begin{equation}\label{energy9}
\begin{split}
|Q_{\alpha\beta}(\p_{\gamma}Z^bu^j,Z^cu^k)|
&\ls|\bar\p\p Z^bu||\p Z^cu|+|\p^2Z^bu||\bar\p Z^cu|\\
&\ls|\bar\p\p Z^bu||\p Z^cu|+|P_{\neq0}\p^2Z^bu||\bar\p Z^cu|+|P_{=0}\p^2Z^bu||\bar\p Z^cu|\\
|Q_{\alpha\beta}(Z^bu^j,Z^cu^k)|
&\ls|\bar\p Z^bu||\p Z^cu|+|P_{\neq0}\p Z^bu||\bar\p Z^cu|+|P_{=0}\p Z^bu||\bar\p Z^cu|.
\end{split}
\end{equation}
Therefore, by using \eqref{energy9} with \eqref{BA3}, \eqref{BA4}, \eqref{pw0:good:low} and Young's inequality again, we can get that
\begin{equation}\label{energy11}
\begin{split}
&\sum_{\substack{|b|+|c|\le|a|\le2N,\\|b|\le N}}\sum_{\alpha,\beta=0}^2\int_0^{t'}\iint_{\R^2\times\T}
|\p_tZ^au^i|\Big\{\sum_{\gamma=0}^3|Q_{\alpha\beta}(\p_{\gamma}Z^bu^j,Z^cu^k)|+|Q_{\alpha\beta}(Z^bu^j,Z^cu^k)|\Big\}dxdydt\\
&\ls\ve_1\sum_{|c|\le2N}\int_0^{t'}\iint_{\R^2\times\T}\frac{|\bar\p Z^cu|^2}{\w{t-|x|}^{1.1}}dxdydt
+\ve_1\int_0^{t'}\w{t}^{-1}E^2_{2N}[u](t)dt.
\end{split}
\end{equation}
If $|b|\ge N+1$, then $|c|\le N-1$.
Instead of \eqref{energy9}, one has
\begin{equation*}
\begin{split}
|Q_{\alpha\beta}(\p_{\gamma}Z^bu^j,Z^cu^k)|
&\ls|\p^2Z^bu||\bar\p Z^cu|+|\bar\p\p Z^bu||P_{\neq0}\p Z^cu|+|\bar\p\p Z^bu||P_{=0}\p Z^cu|,\\
|Q_{\alpha\beta}(Z^bu^j,Z^cu^k)|
&\ls|\p Z^bu||\bar\p Z^cu|+|\bar\p Z^bu||P_{\neq0}\p Z^cu|+|\bar\p Z^bu||P_{=0}\p Z^cu|.
\end{split}
\end{equation*}
Analogously, 
\begin{equation}\label{energy12}
\begin{split}
&\sum_{\substack{|b|+|c|\le|a|\le2N,\\N+1\le|b|\le2N-1}}\sum_{\alpha,\beta=0}^2\sum_{\gamma=0}^3\int_0^{t'}\iint_{\R^2\times\T}
|\p_tZ^au^iQ_{\alpha\beta}(\p_{\gamma}Z^bu^j,Z^cu^k)|\\
&\quad +\sum_{\substack{|b|+|c|\le|a|\le2N,\\|b|\ge N+1}}\sum_{\alpha,\beta=0}^2\int_0^{t'}\iint_{\R^2\times\T}
|\p_tZ^au^iQ_{\alpha\beta}(Z^bu^j,Z^cu^k)|dxdydt\\
&\ls\ve_1\sum_{|c|\le2N}\int_0^{t'}\iint_{\R^2\times\T}\frac{|\bar\p Z^cu|^2}{\w{t-|x|}^{1.1}}dxdydt
+\ve_1\int_0^{t'}\w{t}^{-1}E^2_{2N}[u](t)dt.
\end{split}
\end{equation}
For the last line in \eqref{energy4}, \eqref{BA:pw:full} leads to
\begin{equation}\label{energy13}
\begin{split}
&\sum_{|b|+|c|+|d|\le|a|\le2N}\sum_{\alpha,\beta,\gamma=0}^3\int_0^{t'}\iint_{\R^2\times\T}|\p_tZ^au^i
\p_{\alpha}Z^bu^j\p_{\beta}Z^cu^k\p_{\gamma}Z^du^l|dxdydt\\
&\ls\ve_1^2\int_0^{t'}\w{t}^{-1}E^2_{2N}[u](t)dt.
\end{split}
\end{equation}
Finally, it follows from \eqref{initial:data}, \eqref{energy4}, \eqref{energy8}, \eqref{energy11},
\eqref{energy12}, \eqref{energy13} and the smallness of $\ve_1$ that \eqref{energy} holds.
\end{proof}

\section{Proof of Theorem \ref{thm1}}
\begin{proof}
It follows from \eqref{pw0:improv}, \eqref{pwKG:improv},
\eqref{energy} and Gronwall's inequality with $N\ge21$ that there are two constants $C_1,C_2>1$ such that
\begin{equation*}
E_{2N}[u](t)\le C_1(\ve+\ve_1^2)(1+t)^{C_2\ve_1}
\end{equation*}
and
\begin{align*}
&\sum_{|a|\le N+3}|P_{=0}Z^au(t,x,y)|\le C_1(\ve+\ve_1^2) \w{t+|x|}^{-1/2}\w{t-|x|}^{-0.4},\\
&\sum_{|a|\le N+2}|P_{=0}\p Z^au(t,x,y)|\le C_1(\ve+\ve_1^2)\w{x}^{-1/2}\w{t-|x|}^{-1.4},\\
&\sum_{|a|\le2N-16}|P_{\neq0}Z^au(t,x,y)|\le C_1(\ve+\ve_1^2)\w{t+|x|}^{-1}.
\end{align*}
Let $\ve_1=4C_1\ve$ and $\ve_0=\min\{\frac{1}{16C_1^2},\frac{1}{400C_1C_2}\}$. Then \eqref{BA1}-\eqref{BA4} are improved to
\begin{equation*}
E_{2N}[u](t)\le\frac12\ve_1(1+t)^{\ve_2}
\end{equation*}
and
\begin{equation*}
\begin{split}
&\sum_{|a|\le N+3}|P_{=0}Z^au(t,x,y)|\le\frac12\ve_1 \w{t+|x|}^{-1/2}\w{t-|x|}^{-0.4},\\
&\sum_{|a|\le N+2}|P_{=0}\p Z^au(t,x,y)|\le\frac12\ve_1 \w{x}^{-1/2}\w{t-|x|}^{-1.4},\\
&\sum_{|a|\le2N-16}|P_{\neq0}Z^au(t,x,y)|\le\frac12\ve_1 \w{t+|x|}^{-1}.
\end{split}
\end{equation*}
This, together with the local existence of classical solution to \eqref{QWE}, yields that \eqref{QWE} with \eqref{sym:condition}
admits a unique global solution $u\in\bigcap\limits_{j=0}^{2N+1}C^{j}([0,\infty), H^{2N+1-j}(\R^2\times\T)))$.
Moreover, \eqref{thm1:decay} can be achieved by \eqref{proj:property} and \eqref{BA2}-\eqref{BA4}.

Finally, we prove \eqref{thm1:scater}. We will construct $\{u^\infty_{(0)},u^\infty_{(1)},u^\infty\}$ 
and subsequently show \eqref{thm1:scater}.
Denote $\ds\Lambda:=\Big(-\sum_{j=1}^3\p^2_j\Big)^\frac12$ and $\cU:=(\p_t+i\Lambda)u$.
Then 
\begin{equation*}
(\p_t-i\Lambda)\cU=\Box u.
\end{equation*}
This, together with Duhamel's principle, leads to
\begin{equation}\label{scater:pf5}
\cU(t)=e^{it\Lambda}\cU(0)+\int_0^te^{i(t-s)\Lambda}\Box u(s)ds.
\end{equation}
Set
\begin{equation}\label{scater:pf6}
\begin{split}
&\cU_{(0)}^\infty:=\cU(0)+\int_0^\infty e^{-is\Lambda}\Box u(s)ds,\\
&u^\infty_{(0)}:=\Lambda^{-1}\mathrm{Im}~\cU_{(0)}^\infty,\quad u^\infty_{(1)}:=\mathrm{Re}~\cU_{(0)}^\infty.
\end{split}
\end{equation}
By \eqref{eqn:high}, \eqref{null:high}, the first and fourth lines of \eqref{null:structure}, \eqref{BA2}-\eqref{BA4} and \eqref{pw0:good:low}, we have
that for $0\leq \emph{l} \leq N+2$,
\begin{equation}\label{aux:energy2;Scatt}
\begin{split}
|\Box Z^{\leq \emph{l}} u|&\ls\sum_{|b|+|c|\le \emph{l}}\w{t+|x|}^{-1}(|\p^{\le1}\p Z^bu||ZZ^cu|+|Z\p^{\le1}Z^bu||\p Z^cu|)\\
&+\sum_{|b|+|c|+|d|\le \emph{l}}|P_{=0}\bar\p Z^bu||P_{=0}\p Z^cu||P_{=0}\p Z^du|
+\sum_{\substack{|b|+|c|+|d|\le \emph{l},\\ \mu,\nu\in\{=0,\neq0\}}}|P_{\neq0}\p Z^bu||P_{\mu}\p Z^cu||P_{\nu}\p Z^du|\\
&\ls \ve\w{t+|x|}^{-3/2}(|\p Z^{\leq \emph{l}+1}u|+|P_{=0}\p Z^{\leq \emph{l}}u|+|P_{\neq0}\p Z^{\leq \emph{l}}u|).
\end{split}
\end{equation}
On the other hand, by the standard energy estimates, \eqref{initial:data}, \eqref{proj:property}, \eqref{BA1} and \eqref{aux:energy2;Scatt}, one has
\begin{equation*}
\begin{split}
\|\p Z^{\leq N+2}u(t)\|^2_{L^2_{x,y}}&\ls\|\p Z^{\leq N+2}u(0)\|^2_{L^2_{x,y}}+\int_0^t\|\p_tZ^{\leq N+2}u(s)\|_{L^2_{x,y}}\|\Box Z^{\leq N+2}u(s)\|_{L^2_{x,y}}ds\\
&\ls \ve\|\p Z^{\leq N+2}u(0)\|_{L^2_{x,y}}+\ve^2\int_0^t(1+s)^{-3/2+\ve_2}ds\sup_{s\in[0,t]}\|\p Z^{\leq N+2}u(s)\|_{L^2_{x,y}}\\
&\ls (\ve+\ve^2) \sup_{s\in[0,t]}\|\p Z^{\leq N+2}u(s)\|_{L^2_{x,y}},
\end{split}
\end{equation*}
which further implies
\begin{equation}\label{scater:pf2-Tao-01}
E_{N+2}[u](t)\ls\ve.
\end{equation}
Then, it follows from \eqref{initial:data}, \eqref{scater:pf6}-\eqref{scater:pf2-Tao-01}, the Minkowski inequality and the unitary of $e^{-is\Lambda}$ that
\begin{equation*}
\begin{split}
\|\cU_{(0)}^\infty\|_{H^{N+1}(\R^2\times\T)}&\ls\|\cU(0)\|_{H^{N+1}(\R^2\times\T)}+\int_0^\infty\|e^{-is\Lambda}\Box u(s)\|_{H^{N+1}(\R^2\times\T)}ds\\
&\ls\|\cU(0)\|_{H^{N+1}(\R^2\times\T)}+\int_0^\infty\|\Box \p^{\leq N+1} u(s)\|_{L^2_{x,y}}ds\\
&\ls\ve,
\end{split}
\end{equation*}
which means $(u_{(0)}^\infty,u_{(1)}^\infty)\in \dot H^{N+2}(\R^2\times\T)\times H^{N+1}(\R^2\times\T)$.

Define $u^\infty(t,x)=\Lambda^{-1}\mathrm{Im}e^{it\Lambda}\cU_{(0)}^\infty$. Then $u^\infty$ is the solution
of $\Box u^\infty=0$ with the initial data $(u_{(0)}^\infty,u_{(1)}^\infty)$ at $t=0$.
It follows from \eqref{scater:pf5}-\eqref{scater:pf2-Tao-01} that
\begin{equation*}\label{scater:pf7}
\begin{split}
&\|\p (u^\infty-u(t))\|_{H^{N+1}(\R^2\times\T)}\ls\|e^{it\Lambda}\cU_{(0)}^\infty-\cU(t)\|_{H^{N+1}(\R^2\times\T)}\\
&\ls\int_t^\infty\|e^{i(t-s)\Lambda}\Box u(s)\|_{H^{N+1}(\R^2\times\T)}ds
\ls\int_t^\infty \ve(1+s)^{-3/2}E_{N+2}[u](s)ds 
\ls\ve^2\w{t}^{-1/2},
\end{split}
\end{equation*}
which yields \eqref{thm1:scater}.
\end{proof}

\vskip 0.2 true cm
{\bf \color{blue}{Conflict of interest}}
\vskip 0.2 true cm

{\bf On behalf of all authors, the corresponding author states that there is no conflict of interest.}

\vskip 0.2 true cm

{\bf \color{blue}{Data availability}}

\vskip 0.2 true cm

{\bf Data sharing is not applicable to this article as no new data were created.}

\vskip 0.2 true cm

\end{document}